\newtheorem{theorem}{Theorem}
\numberwithin{theorem}{section}
\newtheorem{proposition}[theorem]{Proposition}
\newtheorem{lemma}[theorem]{Lemma}
\newtheorem{corollary}[theorem]{Corollary}
\theoremstyle{remark}
\theoremstyle{definition}
\newtheorem{remark}[theorem]{Remark}
\newtheorem{definition}[theorem]{Definition}
\newtheorem{example}[theorem]{Example}
\numberwithin{equation}{section}
\newcommand\set[1]{\left\{\,#1\,\right\}}		
\newcommand\abs[1]{\left|#1\right|}				
\newcommand\ska[1]{\left\langle#1\right\rangle} 
\DeclareMathOperator{\id}{id}					
\DeclareMathOperator{\diag}{diag}				
\DeclareMathOperator{\tr}{tr}					
\DeclareMathOperator{\curl}{curl}				
\def\Z{\mathbb{Z}}
\def\Q{\mathbb{Q}}
\def\R{\mathbb{R}}
\def\C{\mathbb{C}}
\def\DC{\text{DC}}
\newcommand{\cC}{{\mathcal C}}
\newcommand{\cF}{{\mathcal F}}
\newcommand{\cL}{{\mathcal L}}
\begin{document}
\setcounter{footnote}{1}
\title{\texorpdfstring{Stability of periodic solutions of the $N$-vortex problem in general domains}{Stability of periodic solutions of the N-vortex problem in general domains}}
\author{Bj\"orn Gebhard\footnote{Supported by DAAD grant 57314604} \and Rafael Ortega\footnote{Supported by MTM2017-82348-C2-1-P (Spain)}}
\date{}
\maketitle

\begin{abstract}
We investigate stability properties of a type of periodic solutions of the $N$-vortex problem on general domains $\Omega\subset \mathbb{R}^2$. The solutions in question bifurcate from rigidly rotating configurations of the whole-plane vortex system and a critical point $a_0\in\Omega$ of the Robin function associated to the Dirichlet Laplacian of $\Omega$. Under a linear stability condition on the initial rotating configuration, which can be verified for examples consisting of up to 4 vortices, we show that the linear stability of the induced solutions is solely determined by the type of the critical point $a_0$. If $a_0$ is a saddle, they are unstable. Otherwise they are stable in a certain linear sense. The proof uses a criterion for the bifurcation of multiple eigenvalues, which is applied to suitable Poincar\'e sections. Beyond linear stability, Herman's last geometric theorem allows us to prove the existence of isoenergetically orbitally stable solutions in the case of $N=2$ vortices.
\end{abstract}

{\bf MSC 2010:} Primary: 37J25; Secondary: 37J45, 37N10, 76B47

{\bf Key words:} vortex dynamics; periodic solutions; stability; Floquet multipliers; bifurcation; Poincar\'e section

\section{Introduction}\label{sec:introduction}
The $N$-vortex problem is a first order Hamiltonian system describing a two-dimensional ideal fluid contained in a  domain $\Omega\subset\R^2$ solely by the positions of finitely many point vortices. In this model one supposes that the velocity field of the fluid $v:\Omega\times[0,T]\rightarrow\R^2$, $(x,t)\mapsto v(x,t)$ satisfying the 2D-Euler equations has a singular vorticity profile of the type
\[
\curl v(x,t)=\partial_{x_1}v_2(x,t)-\partial_{x_2}v_1(x,t)=\sum_{j=1}^N\Gamma_j \delta_{z_j(t)}(x),
\]
such that $z_j(t)$, $j=1,\ldots,N$ are the positions and $\Gamma_j\in\R\setminus\{0\}$, $j=1,\ldots,N$ the strengths of the point vortices.
Using the Euler equations one formally obtains that the time evolution of the positions is given by the Hamiltonian system
\begin{equation}\label{eq:n_vortex_intro}
\Gamma_j\dot{z}_j(t)=J\nabla_{z_j}H_\Omega(z_1(t),\ldots,z_N(t)),\quad j=1,\ldots,N,
\end{equation}
where $J\in\R^{2\times 2}$ denotes rotation by $-\frac{\pi}{2}$  and the Hamiltonian $H_\Omega$ defined on
\[
\cF_N(\Omega)=\set{z=(z_1,\ldots,z_N)\in\Omega^N:z_j\neq z_k~\text{for }k\neq j}
\]
reads
\[
H_\Omega(z_1,\ldots,z_N)=-\frac{1}{2\pi}\sum_{\underset{j\neq k}{j,k=1}}^N\Gamma_j\Gamma_k\log\abs{z_j-z_k}-\sum_{j,k=1}^N\Gamma_j\Gamma_k g_\Omega(z_j,z_k),
\]
with $g_\Omega:\Omega\times\Omega\rightarrow\R$ being the regular part of the Dirichlet Green's function
\[
G_\Omega(x,y)=-\frac{1}{2\pi}\log\abs{x-y}-g_\Omega(x,y). 
\]
I.e., for any $y\in \Omega$ the map $g_\Omega(\cdot,y)$ is harmonic in $\Omega$ and extends to $\overline{\Omega}$ with the boundary values $-\frac{1}{2\pi}\log\abs{\cdot-y}$. As a consequence there holds $g_\Omega(x,y)=g_\Omega(y,x)$ for any $(x,y)\in\Omega\times\Omega$. Note also that contrary to $G_\Omega$, the regular part $g_\Omega$ can be evaluated at the same point defining the so called Robin function $h_\Omega:\Omega\rightarrow \R$,
\[
h_\Omega(z)=g_\Omega(z,z).
\] 

Depending on the considered domain the derivation of the $N$-vortex problem \eqref{eq:n_vortex_intro} goes back to Kirchhoff \cite{kirchhoff_vorlesungen_1876}, Routh \cite{routh_applications_1880} and Lin \cite{lin_motion_1941,lin_motion_1941a}. For more modern literature treating this topic see \cite{flucher_variational_1999,marchioro_mathematical_1994,newton_n-vortex_2001,saffman_vortex_1993}. A rigorous justification of point vortex dynamics as a singular limit of the 2D-Euler equations can be found in \cite{davila_gluing_2018, marchioro_vortices_1993}.

We like to mention that $g_\Omega$ can also be the regular part of a hydrodynamic Green's function, which is a generalization of the Dirichlet Green's function, see \cite{flucher_variational_1999}, and that $N$-vortex type Hamiltonian systems also appear as a singular limit of the Gross-Pitaevskii equation and the Landau-Lifshitz-Gilbert equation, see \cite{jerrard_dynamics_1998,kurzke_ginzburglandau_2011} and references therein. In view of this, we will consider in our investigation a general symmetric and sufficiently smooth function $g:\Omega\times\Omega\rightarrow\R$ instead of $g_\Omega$.

As for Hamiltonian systems in general, a particular interest in the investigation of \eqref{eq:n_vortex_intro} lies on the examination of periodic solutions and their stability properties. In particular for the $N$-vortex system stability investigations can serve as a possible explanation for the lasting occurrence of vortex patterns in hurricanes, which have been observed both in numerical simulations \cite{kossin_mesovortices_2001} and in an actual hurricane \cite{kossin_mesovortices_2004}.

In the whole plane case $\Omega=\R^2$ the studies date back to the 19th century, when Thomson \cite{thomson_treatise_1883} has started the linear stability analysis of the regular $N$-Gon consisting of $N$ identical vortices. It took however more than a century until the nonlinear stability problem for the $N$-Gon, in particular for $N=7$, could be settled. It turned out that this configuration is stable (to be understood in the best possible sense), if and only if $N\leq 7$, see \cite{cabral_stability_2000} for a computer-aided proof and \cite{kurakin_stability_2002} for a non-computer-aided proof, as well as a nice historic review of the problem.

Also other rigidly rotating configurations in different settings have been investigated regarding their stability. Examples include the $N+1$-Gon \cite{cabral_stability_2000} and general configurations with different vorticities \cite{roberts_stability_2013} on the plane, the $N$-Gon in the unit disc \cite{kurakin_stability_2012}, the $N$-Gon and other configurations on the sphere \cite{boatto_nonlinear_2003,pekarsky_point_1998} and the $N$-Gon on a general surface with constant curvature \cite{boatto_curvature_2008}. Further articles regarding stability investigations of vortex configurations can be found in the given references. For an overview of possible relative equilibria solutions on $\R^2$ besides their stability, we refer to \cite{aref_relative_2011,aref_vortex_2003}.

All these settings above share the advantages that the Hamiltonian is explicitly known and invariant with respect to mutual rotations. For example on $\R^2$ the rotational symmetry allows a correspondence between relative equilibria and critical points of the Hamiltonian $H_{\R^2}$ constraint to a level set of the vortex angular impulse $I(z)=\sum_{j=1}^N\Gamma_j\abs{z_j}^2$. This way Dirichlet's criterion can be applied to obtain nonlinear stability results, see for example \cite{roberts_stability_2013}, and Morse theory can be used to investigate linear stability \cite{roberts_morse_2018}.

In general domains $\Omega\subset\R^2$, the Hamiltonian $H_\Omega$ does not share the advantages of the cases stated above. Still, using perturbative approaches the existence of some types of periodic solutions could also be established in general domains \cite{bartsch_periodic_2016a,bartsch_periodic_2018,bartsch_global_2017,bartsch_periodic_2018a,gebhard_periodic_2017}. In recent years, also different kinds of stationary solutions, i.e. critical points of $H_\Omega$,  could be shown to exist for a general $\Omega$, see \cite{bartsch_critical_2015,kuhl_equilibria_2016} and references therein. The question regarding the stability of these equilibria is open.

In this paper we will study stability properties of periodic solutions, that can be shown to exist by scaling the point vortices towards a critical point $a_0$ of the Robin function $h_\Omega$, i.e., towards a stationary solution of the $1$-vortex problem. This way the influence of the domain can be seen as a perturbation of the whole-plane case. In rescaled coordinates the solutions then bifurcate from relative equilibria of the whole-plane system, see Theorem \ref{thm:existence_result} and Section \ref{subsec:the_existence_result_revised} for details.    

Starting with a suitably stable relative equilibrium we will investigate the influence of the domain $\Omega$ on the stability of the bifurcated solutions. In fact we will mainly study linear stability properties in terms of the bifurcation of the Floquet multiplier $1$, which for a rigidly rotating configuration has at least multiplicity $4$. It turns out that only the type of the critical point $a_0$ of $h_\Omega$ influences the bifurcation of the multiplier. If $a_0$ is a saddle, the induced solutions are unstable. Otherwise they remain stable in a certain linear sense.

Examples of suitable rigidly rotating configurations include vortex pairs, equilateral triangles and rhombus configurations. Following ideas of \cite{ortega_stability_2017} based on Herman's last geometric theorem we can also conclude a nonlinear stability result for $N=2$ vortices.
As a consequence, the $2$-vortex problem in a generic bounded domain and with vorticities satisfying $\Gamma_1+\Gamma_2\neq 0$ always has isoenergetically orbitally stable periodic solutions with arbitrary small periods.

The precise statements Theorem \ref{thm:linear_stability}, Theorem \ref{thm:nonlinear_stability_2_vortices} and Corollary \ref{cor:two_vortex_problem_in_domains} can be found in Section \ref{sec:statement_of_results}. Section \ref{sec:preliminaries} contains necessary information about rigidly rotating configurations, specific examples, a revision of the existence result, as well as an expansion of the monodromy operator associated to the induced solutions. 

Based on this expansion we will determine the bifurcation of the Floquet multiplier $1$. One key tool here is a sufficient condition for the bifurcation of multiple eigenvalues that are simple in a higher order approximation, see Section \ref{sec:approximately_simple_eigenvalues} and in particular Lemma \ref{lem:actual_eigenvalues}. However, this criterion can not be applied in a direct way to the monodromy operator.
We therefore introduce in Section \ref{sec:application_to_a_poincare_section} suitable Poincar\'e sections of codimension $2$ and express them in a carefully choosen system of coordinates, such that the criterion applies after a not too large amount of calculations.
Finally, Section \ref{sec:nonlinear_stability} contains the proof of the nonlinear stability result for two vortices.

\section{Statement of results}\label{sec:statement_of_results}
Let $\Omega\subset \R^2$ be a domain, $\Gamma_1,\ldots,\Gamma_N\in\R\setminus\{0\}$, $g\in\cC^m(\Omega\times\Omega)$, $m\geq 2$
with $g(x,y)=g(y,x)$ for every $x,y\in \Omega$. As a generalization of the Dirichlet Green's and Robin function we define 
\begin{align*}
G(x,y)&=-\frac{1}{2\pi}\log\abs{x-y}-g(x,y),\\
h(x)&=g(x,x),
\end{align*}
as well as the corresponding $N$-vortex type Hamiltonian $H:\cF_N(\Omega)\rightarrow\R$,
\begin{align*}
H(z_1,\ldots,z_N)=\sum_{\underset{j\neq k}{j,k=1}}^N\Gamma_j\Gamma_kG(z_k,z_j)-\sum_{j=1}^N\Gamma_j^2h(z_j).
\end{align*}
In order to write \eqref{eq:n_vortex_intro} in a more compact way, we define the $2N\times 2N$ matrices $M_{\vec{\Gamma}}=\diag\big(\Gamma_1,\Gamma_1,\ldots,\Gamma_N,\Gamma_N\big)$ and $J_N=\diag\big(J,\ldots,J\big)$. Furthermore, the total vorticity and the total vortex angular momentum are denoted by 
\[
\Gamma=\sum_{j=1}^N\Gamma_j,\quad L=\sum_{j<k}\Gamma_j\Gamma_k.
\]
We will investigate stability properties of periodic solutions of the $N$-vortex type problem
\begin{equation}\label{eq:n_vortex_type_system}
M_{\vec{\Gamma}}\dot{z}=J_N\nabla H(z),
\end{equation}
which emanate from a critical point of $h$ and a relative equilibrium configuration of the whole-plane system
\begin{equation}\label{eq:n_vortex_on_whole_plane_1}
M_{\vec{\Gamma}}\dot{z}=J_N\nabla H_{\R^2}(z),
\end{equation}
see Theorem \ref{thm:existence_result} below. 

Let $Z(t)=e^{-\nu J_Nt}z_0$ be such a rigidly rotating solution of \eqref{eq:n_vortex_on_whole_plane_1}. After rescaling we can assume that $\nu=\pm 1$, i.e., $Z$ is $2\pi$-periodic. Since the Hamiltonian $H_{\R^2}$ is invariant under mutual translations and rotations, the linearized system
\begin{align}\label{eq:linearized_system_on_whole_plane}
M_{\vec{\Gamma}} \dot{v}=J_N\nabla^2 H_{\R^2}(Z(t))v
\end{align}
has at least $3$ linearly independent $2\pi$-periodic solutions, cf. Section \ref{subsec:the_whole_plane_case} for more details. Here $\nabla^2H_{\R^2}$ denotes the Hessian of $H_{\R^2}$. In terms of Floquet multipliers - the eigenvalues of $X_0(2\pi)$, where $X_0(t)\in\R^{2N\times 2N}$ satisfies \eqref{eq:linearized_system_on_whole_plane} with initial condition $X_0(0)=\id_{\R^{2N}}$ - this fact means that $Z$ has the multiplier $1$ with geometric multiplicity at least $3$. The configuration $Z(t)$ is called nondegenerate, if the geometric multiplicity of the multiplier $1$ is exactly $3$, or in other words, if the space of $2\pi$-periodic solutions of  \eqref{eq:linearized_system_on_whole_plane} has dimension $3$. 

Note that equations \eqref{eq:n_vortex_type_system}--\eqref{eq:linearized_system_on_whole_plane} are Hamiltonian with respect to the symplectic form
\begin{equation}\label{eq:definition_of_omega_Gamma}
\omega_{\vec{\Gamma}}(v,w)=\ska{M_{\vec{\Gamma}} v,J_Nw}_{\R^{2N}},
\end{equation}
thus the Floquet multipliers always appear in pairs $\lambda,\lambda^{-1}$ and the multiplier $1$ has even algebraic multiplicity, see \cite{meyer_introduction_2009}.

With the notion of a nondegenerate relative equilibrium the aforementioned existence result reads as follows.
\begin{theorem}\label{thm:existence_result}
If $\Gamma\neq 0$, $a_0\in\Omega$ is a nondegenerate critical point of $h$ and $Z(t)$ is a $2\pi$-periodic nondegenerate relative equilibrium solution of \eqref{eq:n_vortex_on_whole_plane_1}, then there exists a local family $\big(z^{(r)}\big)_{r\in(0,r_0)}$ of periodic solutions of \eqref{eq:n_vortex_type_system} having periods $2\pi r^2$ and satisfying
\[
z^{(r)}(t)=a_0+rZ(t/r^2)+o(r)
\] 
uniformly in $t$ as $r\rightarrow 0$. Moreover, the map
$
(0,r_0)\times\R\ni(r,t)\mapsto z^{(r)}(t)\in\cF_N(\Omega)
$
is of class $\cC^{m-1}$.
\end{theorem}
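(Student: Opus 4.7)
The natural approach is to rescale around $a_0$ at scale $r$ and bifurcate from $Z$ via a Lyapunov--Schmidt reduction in a space of $2\pi$-periodic curves. First I would substitute $z(t) = a + r\,w(t/r^2)$ with $a\in\R^2$ a parameter near $a_0$ and $w$ the new $2\pi$-periodic unknown, turning \eqref{eq:n_vortex_type_system} into
\[
M_{\vec{\Gamma}}\,w'(s) \;=\; r\,J_N\,\nabla H\bigl(a + r\,w(s)\bigr).
\]
Expanding the right-hand side, the singular $-\tfrac{1}{2\pi}\log|\cdot|$-piece produces exactly $J_N\,\nabla H_{\R^2}(w)$, while the smooth $(g,h)$-contribution yields, in the $j$-th block, $r\,\Gamma_j(\Gamma-\Gamma_j)\,\nabla h(a)+O(r^2)$ (using $\nabla h(z) = 2\nabla_1 g(z,z)$). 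Thus the map
\[
\Phi(w, a, r) \;:=\; M_{\vec{\Gamma}}\,w' - r\,J_N\,\nabla H(a + r w),\qquad \Phi\colon \cC^{1}_{2\pi}(\R;\R^{2N})\times\R^{2}\times[0,r_0) \to \cC^{0}_{2\pi}(\R;\R^{2N}),
\]
is of class $\cC^{m-1}$ and satisfies $\Phi(Z, a, 0) = 0$ for every $a$, and I look for zeros of $\Phi$ near $(Z, a_0, 0)$.

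Next I would analyze the linearization $\cL v := D_w\Phi(Z,a_0,0)v = M_{\vec{\Gamma}}\,v' - J_N\,\nabla^2 H_{\R^2}(Z)\,v$. Nondegeneracy of $Z$ says $\ker\cL$ has dimension $3$, spanned by the two constant translation vectors $\mathbf{1}\otimes e_1,\, \mathbf{1}\otimes e_2$ and the phase vector $Z'$; Floquet theory makes $\cL$ Fredholm of index $0$, and one checks directly that $v\mapsto J_N v$ intertwines $\ker\cL$ with $\ker\cL^{\ast}$, so $\mathrm{coker}\,\cL$ is also $3$-dimensional. Writing $w = Z(\cdot+\theta) + u$ with $u$ in a topological complement of $\ker\cL$, I treat $(\theta, a-a_0)\in\R\times\R^{2}$ as the reduction parameters: the range equation $(I-\pi_{\ker})\Phi = 0$ is solved by the implicit function theorem and gives $u = u(\theta, a, r)$ of class $\cC^{m-1}$ with $u(\theta,a_0,0)=0$.

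On the kernel side, the time-translation invariance of \eqref{eq:n_vortex_type_system} together with $H$ being a first integral make the component of $\pi_{\ker}\Phi$ paired against $Z'$ vanish identically, so one fixes $\theta\equiv 0$. The remaining two scalar equations in $a-a_0$ have leading order $2\pi r\,\Gamma^{2}\,\nabla h(a) + O(r^{2})$; this is precisely where the hypothesis $\Gamma\neq 0$ of the theorem enters to keep the coefficient nonzero. Dividing by $r$ and invoking the nondegeneracy $\det\nabla^2 h(a_0)\neq 0$, a second application of the implicit function theorem yields a $\cC^{m-1}$ branch $a=a(r)$ with $a(0)=a_0$. Undoing the rescaling produces
\[
z^{(r)}(t) \;=\; a(r) + r\,w^{(r)}(t/r^{2}) \;=\; a_0 + rZ(t/r^{2}) + o(r),
\]
a $2\pi r^{2}$-periodic solution of \eqref{eq:n_vortex_type_system} with the claimed regularity in $(r,t)$ inherited from the two implicit-function arguments.

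The step I expect to be the main technical obstacle is the explicit leading-order computation: verifying that summing $\partial_{z_j}H(a+rw)$ over $j$ kills all mutual-interaction singularities (by translation invariance of $H_{\R^2}$) and collapses the regular part to $-\Gamma^{2}\,\nabla h(a) + O(r)$, so that the pairings of $\Phi$ with $\mathbf{1}\otimes e_1$ and $\mathbf{1}\otimes e_2$ indeed recover $\nabla h(a)$ up to the nonzero factor $2\pi\Gamma^{2}$. Combined with the companion identity that the $Z'$-pairing reduces, after one integration by parts, to $\int_0^{2\pi}\tfrac{d}{ds}H(a+rZ)\,ds = 0$, these are the two ingredients that render the bifurcation equation tractable; the remainder of the argument is routine two-step implicit function theorem.
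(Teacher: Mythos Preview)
Your strategy coincides with the one the paper sketches in Section~\ref{subsec:the_existence_result_revised} (and attributes to \cite{bartsch_global_2017,gebhard_periodic_2017}): rescale so that \eqref{eq:n_vortex_type_system} becomes a smooth perturbation of the whole-plane system, then continue $Z$ by letting the nondegenerate critical point of $h$ absorb the two translational kernel directions and the autonomous $S^{1}$-action absorb the phase. The paper uses the single ansatz $z(t)=ru(t/r^{2})$ (taking $a_0=0$) together with the variational structure on $H^{1}_{2\pi}$, whereas you carry the center $a$ as an explicit parameter and run a direct Lyapunov--Schmidt in $C^{1}_{2\pi}$; these are equivalent packagings of the same continuation argument.

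One computational slip: the $j$-th block of the regular part at $z=\hat a$ is $\Gamma_j\Gamma\,\nabla h(a)$, not $\Gamma_j(\Gamma-\Gamma_j)\,\nabla h(a)$, because the diagonal Robin terms $-\Gamma_j^{2}h(z_j)$ also contribute (cf.\ \eqref{eq:lem:basic_properties_nabla_F}). This is precisely what makes the summed coefficient $\Gamma^{2}$ rather than $2L$, in agreement with your own stated bifurcation equation and with the hypothesis $\Gamma\neq 0$; with your intermediate formula the coefficient would be $\sum_j\Gamma_j(\Gamma-\Gamma_j)=2L$, which need not be nonzero under the theorem's assumptions. A second minor point: since $J_N\dot Z=\nu Z$, the third cokernel element you pair against is proportional to $Z$ rather than $\dot Z$; your symmetry/energy argument for that component still goes through.
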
 
This statement can be found in \cite{bartsch_global_2017}, Theorem 2.1 e) and also follows as a special case of \cite{gebhard_periodic_2017}, Theorem 1.9. A refinement of Theorem \ref{thm:existence_result} applies also to choreographic configurations, that are degenerate as a general relative equilibrium, but nondegenerate as a choreographic solution. An example is the Thomson $N$-Gon configuration. In fact the very first existence result for these kind of periodic solutions, due to Bartsch and Dai \cite{bartsch_periodic_2016a}, treats the regular $N$-Gon.

Concerning the existence of nondegenerate critical points of the actual Robin function $h_\Omega$ it has been shown in \cite{bartsch_morse_toappear} that at least after an arbitrary small deformation of the domain all critical points are nondegenerate. A minimum always exists, if $\Omega$ is bounded. 

We would also like to mention that for the existence alone, $a_0$ does not need to be a nondegenerate critical point, it is enough that $a_0$ is topological stable, i.e., that $a_0$ is an isolated critical point with nonvanishing Brouwer index, see \cite{bartsch_periodic_2016a,bartsch_global_2017}. But under these weaker assumption it is not clear, if the induced solutions form a continuous family of solutions, which is needed for our further stability analysis.

\begin{definition}\label{def:types_of_periodic_solutions} a) A periodic solution $z$ of a Hamiltonian system is called spectrally stable, if all Floquet multipliers lie on the unit circle $S^1$. Otherwise we say that it is spectrally unstable. The periodic solution is called L-stable, if it is spectrally stable, the multiplier $1$ has algebraic mutliplicity $2$ and the remaining nontrivial multipliers are all simple, in particular $-1$ is not contained in the Floquet spectrum.

b) Recall further that a relative equilibrium solution $Z(t)=e^{-\nu J_N t}z_0$ of \eqref{eq:n_vortex_on_whole_plane_1} is called nondegenerate, if the geometric multiplicity of the multiplier $1$ is $3$. If in addition the algebraic multiplicity is $4$, the configuration is called algebraic nondegenerate. The notion of L-stability is adapted for relative equilibria in the sense that $Z(t)$ is required to be spectrally stable, algebraic nondegenerate and the remaining nontrivial multipliers have to be simple. Again they are in particular different from $-1$. Relative equilibria satisfying these three properties are called LRE-stable.
\end{definition}

We will see that an algebraic nondegenerate relative equilibrium is also nondegenerate, cf. Lemma \ref{lem:floquet_solutions_of_whole_plane_case}. Furthermore, $\Gamma\neq 0$ and $L\neq 0$ are necessary conditions for algebraic nondegenerateness, see Lemma \ref{lem:algebraic_nondegenerate_implies_L_not_0}.

Regarding stability properties we start with the following observation, which is a direct consequence of the continuity of the Floquet multipliers and the equivalent rescaled formulation of Theorem \ref{thm:existence_result} in Proposition \ref{prop:existence_result_detailled} below.
\begin{remark}
If $Z(t)$ is already a spectrally unstable configuration, then also the induced solutions $z^{(r)}$ for $r>0$ small enough are spectrally unstable.
\end{remark}
Therefore it remains to look at spectrally stable configurations.
\begin{theorem}\label{thm:linear_stability} 
Let $g\in\cC^4(\Omega\times\Omega)$, $a_0\in\Omega$ be a nondegenerate critical point of $h$ and $Z(t)$ a $2\pi$-periodic LRE-stable relative equilibrium of \eqref{eq:n_vortex_on_whole_plane_1}.
\begin{enumerate}[a)]
\item If $a_0$ is a saddle point of $h$, then the induced solutions $z^{(r)}$ of \eqref{eq:n_vortex_type_system} are spectrally unstable for $r>0$ small enough.
\item If $h$ has a local minimum or maximum in $a_0$, then the induced solutions $z^{(r)}$ of \eqref{eq:n_vortex_type_system} are L-stable for $r>0$ small enough.
\end{enumerate}
\end{theorem}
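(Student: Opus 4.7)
The plan is to analyze the Floquet multipliers of $z^{(r)}$ as a perturbation of those of $Z$, exploiting that by LRE-stability the multiplier $1$ has algebraic multiplicity exactly $4$ for $Z$ while every other nontrivial multiplier is simple and on $S^1$. By continuity of the monodromy spectrum, the nontrivial multipliers of $z^{(r)}$ remain simple and on $S^1$ for $r>0$ small, so the whole question reduces to tracking the four multipliers at $1$. Two of them must stay at $1$ for any periodic solution of a Hamiltonian system (flow direction plus energy), and these are the ones removed by passing to the codimension-$2$ Poincar\'e section introduced in Section~\ref{sec:application_to_a_poincare_section}. The remaining pair, which sits at $1$ for $Z$ because of the whole-plane translation symmetry of $H_{\R^2}$, is the pair whose bifurcation must be computed.

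After choosing the section and expressing its return map $P_r$ in coordinates adapted to the translation eigenspace at $r=0$, I would expand $P_r = P_0 + r A_1 + r^2 A_2 + o(r^2)$ and restrict attention to the $2\times 2$ block corresponding to the two eigenvalues bifurcating from $1$. On this block $P_0$ equals the identity, since translation is an exact symmetry of the whole-plane problem. Because the two eigenvalues coalesce at $r=0$ they are not simple in the classical sense, so direct analytic perturbation fails; however they are ``simple in a higher order approximation'' in the sense of Lemma~\ref{lem:actual_eigenvalues}, and that criterion reduces the calculation to reading off the spectrum of a reduced $2\times 2$ matrix obtained from $A_1$, $A_2$.

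The main content is identifying this reduced matrix. Using the rescaling $z^{(r)}(t)=a_0+rZ(t/r^2)+o(r)$ and Taylor expanding $g$ around $(a_0,a_0)$, the domain contribution to the linearized equations appears at order $r^2$ and is governed by $\nabla^2 h(a_0)$, while the first-order term $A_1$ either vanishes on this block or contributes only a conjugation. A direct computation using the Floquet basis of $Z$ should give a reduced matrix of the form
\[
I + r^2\, c\, J\, \nabla^2 h(a_0) + o(r^2),
\]
for some nonzero constant $c$ depending on $\vec\Gamma$, the period, and $\Gamma\neq 0$. Since $\tr(J\nabla^2 h(a_0))=0$ and $\det(J\nabla^2 h(a_0))=\det\nabla^2 h(a_0)$, the two bifurcating multipliers are
\[
1\pm r\sqrt{-c^2\det\nabla^2 h(a_0)}+o(r).
\]
If $a_0$ is a saddle, $\det\nabla^2 h(a_0)<0$ and the radicand is positive, so the pair becomes real and reciprocal, hence off $S^1$, proving (a). If $a_0$ is a local minimum or maximum, $\det\nabla^2 h(a_0)>0$ and the pair is purely imaginary, i.e.\ a genuine complex conjugate pair on $S^1$ distinct from $\pm 1$, and in particular simple; together with the continuity argument for the other multipliers and the LRE-stability assumption (which keeps the other multipliers simple, on $S^1$, and bounded away from $\pm 1$ for small $r$), this is exactly the L-stability required in (b).

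The hard step is the identification of the $r^2$-coefficient. It is essentially a symplectic second-variation calculation: one differentiates the rescaled equations twice at $r=0$ along the translation eigenvectors and integrates the resulting inhomogeneous linear system against the symplectically dual Floquet eigenvectors over $[0,2\pi]$, using $\nabla h(a_0)=0$ to kill the first-order secular contribution and $\Gamma\neq 0$ to ensure that the coefficient $c$ does not accidentally vanish. The bookkeeping is nontrivial because of the need to quotient by the two trivial directions of the Hamiltonian, but the Poincar\'e section coordinates of Section~\ref{sec:application_to_a_poincare_section} are tailored precisely to make this computation feasible without a prohibitive amount of algebra.
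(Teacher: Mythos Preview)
Your plan coincides with the paper's proof: pass to the codimension-$2$ Poincar\'e section of Section~\ref{sec:application_to_a_poincare_section}, verify that the restriction of the return map to the translation eigenspace $D$ has the form $\id_D + r^2\,c\,B_0 + o(r^2)$ with $B_0\hat a=(J\nabla^2 h(a_0)a)\widehat{\phantom{|}}$ and $c=-2\pi\Gamma$, and then invoke Lemma~\ref{lem:actual_eigenvalues} to read off the bifurcating pair $\lambda_\pm(r)=1\pm 2\pi\Gamma r^2\sqrt{-\det\nabla^2 h(a_0)}+o(r^2)$, from which (a) and (b) follow exactly as you describe.

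One slip worth fixing: in your displayed formula the perturbation appears at order $r$ rather than $r^2$. On the section the eigenvalue $1$ of $M_0$ is \emph{semisimple} (the algebraic and geometric multiplicities both equal $2$, which is precisely what the reduction buys and what Definition~\ref{def:approx_simple_eigenvalue} requires), so the eigenvalues of $\id+r^2 B+o(r^2)$ are $1+r^2\mu_0+o(r^2)$ with $\mu_0\in\sigma(B)$; the $r\sqrt{\,\cdot\,}$ splitting you wrote would only arise from a non-semisimple Jordan block. This does not affect the dichotomy, since the sign of $\det\nabla^2 h(a_0)$ still decides whether the pair lands on $\R$ or on $S^1$.
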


The stability result applies to the following configurations, which have been investigated by Roberts \cite{roberts_stability_2013}. The details can be found in Section \ref{subse:examples}.
\begin{example}\label{ex:configurations} a) As long as $\Gamma\neq 0$, every solution of the $2$-vortex problem on $\R^2$ is a LRE-stable relative equilibrium solution.

b) Every equilateral triangle is a rigidly rotating configuration of the $3$-vortex problem provided $\Gamma\neq 0$. 
The triangle is algebraic degenerate in exactly two situations, if the total vortex angular momentum $L$ is vanishing or if all three vorticities are identical. If $L<0$, the triangle is spectrally unstable. Moreover, it is LRE-stable provided $L$ is positive, at least two vorticities are different and $10L\neq \Gamma_1^2+\Gamma_2^2+\Gamma_3^2$. 

c) Let $y>\frac{1}{\sqrt{3}}$ and $\kappa(y)=\frac{3y^2-y^4}{3y^2-1}$. Four vortices with stregths $\Gamma_1=\Gamma_2=1$, $\Gamma_3=\Gamma_4=\kappa(y)$ placed at $z_1=\pi^{-\frac{1}{2}}(-1,0)$, $z_2=\pi^{-\frac{1}{2}}(1,0)$, $z_3=\pi^{-\frac{1}{2}}(0,-y)$, $z_4=\pi^{-\frac{1}{2}}(0,y)$ form a rigidly rotating configuration. There exists $\delta>0$, such that this rhombus configuration is LRE-stable for $y\in(1-\delta,1+\delta)\setminus \{1\}$.
\end{example}
Contrary to these three positive examples, the straight line configurations for $N\geq 3$ are all nondegenerate, but spectrally unstable, see Corollary 3.3 in \cite{roberts_stability_2013}. Hence the induced solutions are spectrally unstable as well.

In the case that $\Omega$ is the unit disc, for any $\rho\in(0,1)$ the $N$-Gon placed on the circle of radius $\rho$ still forms a rigidly rotating configuration, whose stability has been studied. In particular the local part of the $N$-Gon family with $\rho>0$ small is stable if and only if $N\leq 6$, see \cite{kurakin_stability_2012} and references therein. This part of the family can be seen as a concrete example of Theorem \ref{thm:existence_result}. Now in a general domain $\Omega$ for the corresponding family induced by the Thomson $N$-Gon near a nondegenerate minimum of $h_\Omega$, it would therefore be interesting to see to what extend the stability properties coincide with those of the unit disc families. This however is not covered by our investigation. The whole-plane $N$-Gon is algebraically degenerate, since the algebraic multiplicity of the multiplier $1$ is at least $6$, see \cite{cabral_stability_2000}.

Regarding nonlinear stability we are able to state the following result in the case of $N=2$ vortices. Recall that in the classic case $g=g_\Omega$ is the regular part of the Dirichlet Green's function and in particular of class $\cC^\infty$.
\begin{theorem}\label{thm:nonlinear_stability_2_vortices}
Let $N=2$, $\Gamma_1,\Gamma_2\in\R\setminus\{0\}$ with $\Gamma\neq 0$, $g\in\cC^\infty(\Omega\times\Omega)$ and $a_0\in\Omega$ be a nondegenerate local minimum or maximum of $h$. Denote by $\big(z^{(r)}\big)_{r\in(0,r_0)}$ the periodic solutions of \eqref{eq:n_vortex_type_system} induced near $a_0$ by the $2\pi$-periodic vortex pair solution of the corresponding whole-plane system. Then there exists $r_1>0$, such that for almost every $r\in(0,r_1)$ the solution $z^{(r)}$ is isoenergetically orbitally stable.
\end{theorem}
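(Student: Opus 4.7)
The plan, following the strategy of \cite{ortega_stability_2017}, is to reduce isoenergetic orbital stability of $z^{(r)}$ to Lyapunov stability of an elliptic fixed point of a smooth area-preserving Poincar\'e map, and then invoke Herman's last geometric theorem. Since $N=2$ and $g\in\cC^\infty(\Omega\times\Omega)$, the phase space $\cF_2(\Omega)$ is four-dimensional and the energy surface $E_r=\{H=H(z^{(r)}(0))\}$ through $z^{(r)}$ is three-dimensional. I would choose a smooth two-dimensional Poincar\'e section $\Sigma_r\subset E_r$ transverse to the orbit at $z^{(r)}(0)$, with the section varying smoothly in $r$; the first-return map $P_r:\Sigma_r\to\Sigma_r$ is then a $\cC^\infty$ diffeomorphism preserving the area form induced by restricting $\omega_{\vec{\Gamma}}$ to $\Sigma_r$, and it fixes $z^{(r)}(0)$.

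By Example~\ref{ex:configurations}~a) the whole-plane vortex pair is LRE-stable, so Theorem~\ref{thm:linear_stability}~b) applies and guarantees L-stability of $z^{(r)}$ for every small $r>0$. Thus the nontrivial Floquet multipliers form a simple pair $e^{\pm 2\pi i\alpha(r)}$ on $S^1$ with $\alpha(r)\in(0,1)\setminus\{\tfrac12\}$, and $dP_r(z^{(r)}(0))$ is conjugate to a rigid rotation of angle $2\pi\alpha(r)$. The map $r\mapsto\alpha(r)$ inherits $\cC^\infty$ regularity from Theorem~\ref{thm:existence_result} (with $m=\infty$) together with the fact that the eigenvalues are simple. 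As $r\to 0$ the monodromy degenerates to that of the unperturbed rigidly rotating vortex pair, whose sole multiplier is $1$ of algebraic multiplicity $4$, so $\alpha(r)\to 0$. The expansion of the monodromy operator of Section~\ref{sec:application_to_a_poincare_section}, combined with the local min/max assumption on $a_0$, should yield a leading term $\alpha(r)=c\,r^{k}+o(r^{k})$ for some $k\ge 1$ and some $c\neq 0$ depending on $D^{2}h(a_0)$. In particular $\alpha'(r)\neq 0$ for all sufficiently small $r$, so $\alpha$ pulls back Lebesgue-null sets to Lebesgue-null sets, and since the Diophantine numbers form a full-measure set in $\R$, the set $\cD=\{r\in(0,r_1):\alpha(r)\text{ is Diophantine}\}$ has full measure in $(0,r_1)$.

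For every $r\in\cD$, $P_r$ is a $\cC^\infty$ area-preserving diffeomorphism of a neighborhood of the origin in $\R^2$ whose elliptic fixed point has Diophantine rotation number; Herman's last geometric theorem then yields Lyapunov stability of $z^{(r)}(0)$ under $P_r$, which translates via the standard Poincar\'e construction into isoenergetic orbital stability of $z^{(r)}$ inside $E_r$. The main obstacle is the non-degeneracy step: one must show that the coefficient $c$ in the expansion $\alpha(r)=c\,r^{k}+o(r^{k})$ does not vanish, so that $\alpha$ is strictly monotone near $0$ and avoids the pathological regime $\alpha\equiv 0$ in which Herman's theorem is not available. This should be achievable by a careful bookkeeping within the very same bifurcation analysis that distinguishes cases a) and b) of Theorem~\ref{thm:linear_stability}: in the min/max case the two multipliers split from $1$ along $S^1$, and the leading-order rate of this splitting, expressed through the eigenvalues of $D^{2}h(a_0)$, is precisely the coefficient $c$ one needs. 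The remaining issues (smooth dependence of $\Sigma_r$ and $P_r$ on $r$, compatibility of the induced area form with Herman's hypotheses, and the equivalence between Lyapunov stability in $\Sigma_r$ and isoenergetic orbital stability of the orbit) are routine.
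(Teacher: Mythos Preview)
Your overall strategy coincides with the paper's: pass to a two-dimensional symplectic Poincar\'e section, identify the nontrivial Floquet multipliers $e^{\pm 2\pi i\alpha(r)}$, show that $\alpha(r)$ hits Diophantine numbers for almost every small $r$, and invoke Herman's last geometric theorem. The paper's linear analysis (Lemmas~\ref{lem:actual_eigenvalues} and~\ref{lem:1_is_approximately_simple}) gives the explicit expansion $\lambda_\pm(r)=1\pm 2\pi\Gamma r^2\sqrt{-\det\nabla^2h(0)}+o(r^2)$, so your expected leading term has $k=2$ and $c=\Gamma\sqrt{\det\nabla^2h(a_0)}\neq 0$.

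There is, however, a genuine gap in your passage from ``$\alpha(r)=cr^{k}+o(r^{k})$'' to ``$\alpha'(r)\neq 0$ for all sufficiently small $r$''. Your justification for the smoothness of $\alpha$ is that the eigenvalues are simple, but this holds only for $r>0$; at $r=0$ all four multipliers collapse to $1$, so simple-eigenvalue perturbation theory says nothing about regularity of $\alpha$ at the endpoint. Knowing only that $\alpha\in\cC^\infty((0,r_0))$ together with a pointwise expansion $\alpha(r)=cr^{2}+o(r^{2})$ does \emph{not} force $\alpha'(r)\neq 0$ near $0$ (consider e.g.\ $r^{2}+r^{5/2}\sin(1/r^{2})$), and without this you cannot conclude that $\alpha^{-1}$ of a null set is null.

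The paper sidesteps this by never differentiating $\alpha$ directly. Instead it works with the trace $T(r)=\tr X_r(2\pi)$, which is a polynomial in the entries of the monodromy matrix and hence manifestly $\cC^\infty$ on $[0,r_0)$. Using $\tr X_r(2\pi)=2+2\cos\varepsilon(r)$ and the expansion $\lambda_+(r)=\xi(r)+i\eta(r)$ with $\eta(r)=cr^2+o(r^2)$, one gets $\xi(r)=1-\tfrac{1}{2}c^2r^4+o(r^4)$ and hence $T(r)=4-c^2r^4+o(r^4)$. Since $T$ is smooth on $[0,r_0)$, this expansion \emph{does} yield $T'(r)\neq 0$ for small $r>0$, so $T$ is a local diffeomorphism and pulls back the full-measure set $f(\DC\cap(0,\tfrac12))$ (with $f(x)=2+2\cos(2\pi x)$) to a full-measure set of parameters. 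This is exactly the missing piece in your argument, and once inserted the two proofs are the same.
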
 

By isoenergetic orbital stability we mean that for any neighborhood $U$ of the orbit $z^{(r)}(\R)$ there exists another neighborhood $V$ of $z^{(r)}(\R)$, such that every solution $\xi(t)$ of \eqref{eq:n_vortex_type_system} with initial condition $\xi_0\in V\cap H^{-1}\big(z^{(r)}(0)\big)$ is defined for all $t\in\R$ and satisfies $\xi(\R)\subset U$. This is equivalent to saying that every symplectic Poincar\'e map associated to $z^{(r)}$ at some point $z^{(r)}(t_0)$ has this point as a Lyapunov stable fixed point. More on this notion of stability can be found in \cite{siegel_lectures_1971}.

We conclude this section with a consequence of Theorems \ref{thm:linear_stability}, \ref{thm:nonlinear_stability_2_vortices} and the fact that the Robin function $h_\Omega$ of a generic bounded domain has a nondegenerate minimum, \cite{bartsch_morse_toappear}. Here generic is understood in the sense that if $\Omega$ is a bounded domain with $\cC^{2,\alpha}$ boundary for some $0<\alpha<1$, then there exists an arbitrary small deformation $\psi\in\cC^{2,\alpha}(\overline{\Omega},\R^2)$, such that the Robin function associated to $\Omega_\psi=(\id_{\R^2}+\psi)(\Omega)$ has only nondegenerate critical points.  
\begin{corollary}\label{cor:two_vortex_problem_in_domains}
In a generic bounded domain the $2$-vortex problem with vorticities $\Gamma_1+\Gamma_2\neq 0$ always has a smooth family $\big(z^{(r)}\big)_{r\in(0,r_1)}$ of L-stable periodic solutions. Moreover, for almost every $r\in(0,r_1)$ the solution $z^{(r)}$ is isoenergetically orbitally stable.
\end{corollary}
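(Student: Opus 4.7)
The plan is to combine three ingredients that have already been assembled in the paper: the Bartsch--Morse genericity statement for nondegenerate critical points of $h_\Omega$ recalled just before the statement, the unconditional LRE-stability of planar vortex pairs with $\Gamma\neq 0$ from Example \ref{ex:configurations} a), and the two stability results for the induced family near a nondegenerate minimum of $h$, namely Theorem \ref{thm:linear_stability} b) and Theorem \ref{thm:nonlinear_stability_2_vortices}.

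First I would start from an arbitrary bounded domain $\Omega$ with $\cC^{2,\alpha}$ boundary and take $g=g_\Omega$, which by interior elliptic regularity belongs to $\cC^\infty(\Omega\times\Omega)$, so the smoothness hypotheses of both theorems are automatic. Because $\Omega$ is bounded, the standard boundary blow-up $h_\Omega(x)\to +\infty$ as $x\to\partial\Omega$ forces $h_\Omega$ to attain a global minimum in the interior. Invoking the deformation result cited from \cite{bartsch_morse_toappear}, I replace $\Omega$ by an arbitrarily small $\cC^{2,\alpha}$ perturbation $\Omega_\psi$ whose Robin function has only nondegenerate critical points. Since the boundary blow-up is preserved by such a perturbation, $h_{\Omega_\psi}$ still attains a global minimum at some $a_0\in\Omega_\psi$, which is now automatically nondegenerate.

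With $N=2$ and $\Gamma=\Gamma_1+\Gamma_2\neq 0$, Example \ref{ex:configurations} a) supplies a $2\pi$-periodic LRE-stable vortex-pair solution $Z(t)$ of the whole-plane system. Applying Theorem \ref{thm:linear_stability} b) to the data $(\Omega_\psi,a_0,Z)$ delivers the desired smooth family $\big(z^{(r)}\big)_{r\in(0,r_0)}$ of L-stable periodic solutions of the two-vortex problem in $\Omega_\psi$. Feeding the same data into Theorem \ref{thm:nonlinear_stability_2_vortices} then yields $r_1\in(0,r_0]$ such that for almost every $r\in(0,r_1)$ the solution $z^{(r)}$ is isoenergetically orbitally stable.

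As the statement is essentially a synthesis, there is no serious analytic obstacle; the only point deserving verification is that the interior minimum of $h_\Omega$ actually survives the Bartsch--Morse deformation and is detected as a nondegenerate critical point, which I would confirm by a short continuity/compactness argument based on the uniform boundary blow-up of the Robin function.
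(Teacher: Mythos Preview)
Your argument is correct and matches the paper's own treatment: the corollary is presented there simply as a consequence of Theorems~\ref{thm:linear_stability} and~\ref{thm:nonlinear_stability_2_vortices} together with the generic nondegeneracy of a minimum of $h_\Omega$ from \cite{bartsch_morse_toappear}, with Example~\ref{ex:configurations}~a) supplying the LRE-stable vortex pair. The additional details you spell out (smoothness of $g_\Omega$, persistence of an interior minimum under the deformation) are exactly the routine checks one needs and introduce no deviation from the paper's approach.
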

\section{Preliminaries}\label{sec:preliminaries}
\subsection{Notation}
The euclidian scalar product on $\R^{2N}$ is denoted by $\ska{\cdot,\cdot}=\ska{\cdot,\cdot}_{\R^{2N}}$ and $V^\perp$ stands for the orthogonal complement of a subspace $V\leq \R^{2N}$ with respect to $\ska{\cdot,\cdot}$. 

Let $g:\Omega\times\Omega\rightarrow\R$, $(x,y)\mapsto g(x,y)$ and $F:\R^{2N}\rightarrow\R$, $z\mapsto F(z_1,\ldots,z_N)$ be smooth functions. The derivative $\nabla_1g(x,y)$ denotes the gradient of $g(\cdot,y)$ evaluated at $x$.
In the same way we understand the derivative $\nabla_{z_j}F$. Moreover, for $z,v_1,\ldots,v_{k-1}\in\R^{2N}$ the expression $\nabla^kF(z)[v_1,\ldots,v_{k-1}]$ denotes the unique vector $w\in \R^{2N}$ satisfying $\ska{w,\cdot}=D^kF(z)[v_1,\ldots,v_{k-1},\cdot]$. Similarly we write $\nabla^k F(z)[v_1,\ldots,v_{k-2}]$ for the unique symmetric matrix $W\in \R^{2N\times 2N}$ satisfying $\ska{W\cdot,\cdot}=D^kF(z)[v_1,\ldots,v_{k-2},\cdot,\cdot]$.

For $\R\times\R^{2N}\ni(r,z)\mapsto H_r(z)\in\R$ smooth we use the notation 
$\partial_r^j\nabla^kH_0(z)[\ldots]$ for the $j$th derivative $\left(\frac{d}{dr}\right)^j\big(\nabla^kH_r(z)[\ldots]\big)$ evaluated at $r=0$.

\subsection{The whole-plane case}\label{subsec:the_whole_plane_case}
We begin our investigation with a relative equilibrium solution $Z(t)=e^{-\nu J_N t}z_0$, $\nu\in\{\pm 1\}$, of the whole-plane system
\begin{equation}\label{eq:n_vortex_whole_plane}
M_{\vec{\Gamma}}\dot{z}=J_N\nabla H_0(z),
\end{equation}
where $H_0:\cF_N(\R^2)\rightarrow\R$,
\[
H_0(z)=H_{\R^2}(z)=-\frac{1}{2\pi}\sum_{\underset{k\neq j}{k,j=1}}^N\Gamma_k\Gamma_j\log\abs{z_j-z_k}.
\]
One immediatly sees that $H_0$ is invariant with respect to mutual translations, i.e., $H_0(\cdot+\hat{a})=H_0$ for every element $\hat{a}$ of
\[
D=\set{\hat{a}=(a,\ldots,a)\in\R^{2N}:a\in\R^2}.
\]
The space $D$ will play an important role in the stability analysis. For now we collect some basic properties of the Hamiltonian $H_0$ and the rigidly rotating solution $Z(t)$.
\begin{lemma}\label{lem:properties_of_whole_plane_hamiltonian}
For all $z\in\cF_N(\R^2)$, $t\in\R$, $k\geq 1$, $v_1,\ldots,v_{k-1}\in\R^{2N}$ there holds
\begin{gather}
\nabla^k H_0(z)[v_1,\ldots,v_{k-1}]\in D^\perp,\label{eq:lem:properties_of_whole_plane:orthogonality_of_H_0}\\
\ska{\nabla H_0(z),z}=-\frac{L}{\pi},\label{eq:lem:properties_of_whole_plane:nabla_H_times_z}\\
M_{\vec{\Gamma}} Z(t)\in D^\perp.\label{eq:lem:properties_of_whole_plane:center_of_vorticity_0}
\end{gather}
\end{lemma}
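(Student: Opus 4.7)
My plan is to exploit the symmetries of $H_0$: translation invariance gives part (a), scaling behavior gives part (b), and the rotational ansatz together with (a) gives (c).

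For \eqref{eq:lem:properties_of_whole_plane:orthogonality_of_H_0}, the key observation is that $H_0(z+\hat a)=H_0(z)$ for every $\hat a\in D$, since $|z_j+a-(z_k+a)|=|z_j-z_k|$. I would prove the claim by induction on $k$. For $k=1$, differentiating the identity $H_0(z+t\hat a)=H_0(z)$ at $t=0$ yields $\ska{\nabla H_0(z),\hat a}=0$ for every $\hat a\in D$. For the inductive step, I would note that translation invariance is inherited by all derivatives: $D^{k-1}H_0(z+t\hat a)=D^{k-1}H_0(z)$ identically in $t$, and by Schwarz's theorem
\[
\ska{\nabla^k H_0(z)[v_1,\ldots,v_{k-1}],\hat a}=D^kH_0(z)[v_1,\ldots,v_{k-1},\hat a]=\tfrac{d}{dt}\Big|_{t=0}D^{k-1}H_0(z+t\hat a)[v_1,\ldots,v_{k-1}]=0,
\]
which gives the assertion.

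For \eqref{eq:lem:properties_of_whole_plane:nabla_H_times_z}, I would use the logarithmic scaling behavior. Since $\log|\lambda z_j-\lambda z_k|=\log\lambda+\log|z_j-z_k|$ for $\lambda>0$, a direct computation gives
\[
H_0(\lambda z)=H_0(z)-\frac{\log\lambda}{2\pi}\sum_{j\neq k}\Gamma_j\Gamma_k=H_0(z)-\frac{L\log\lambda}{\pi}.
\]
Differentiating in $\lambda$ at $\lambda=1$ yields $\ska{\nabla H_0(z),z}=-L/\pi$.

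For \eqref{eq:lem:properties_of_whole_plane:center_of_vorticity_0}, I would plug the ansatz $Z(t)=e^{-\nu J_N t}z_0$ into the equation of motion. Since $\dot Z=-\nu J_N Z$ and $M_{\vec\Gamma}$ commutes with $J_N$ (the diagonal blocks are scalar multiples of the identity, which commute with $J$), the equation $M_{\vec\Gamma}\dot Z=J_N\nabla H_0(Z)$ becomes $-\nu J_N M_{\vec\Gamma}Z=J_N\nabla H_0(Z)$, and multiplying on the left by $J_N^{-1}$ yields $\nabla H_0(Z(t))=-\nu M_{\vec\Gamma}Z(t)$. Pairing with any $\hat a\in D$ and applying \eqref{eq:lem:properties_of_whole_plane:orthogonality_of_H_0} gives $-\nu\ska{M_{\vec\Gamma}Z(t),\hat a}=0$, and since $\nu=\pm 1$ the claim follows.

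None of the three parts presents a real obstacle; the only mildly delicate point is keeping the commutation $[M_{\vec\Gamma},J_N]=0$ straight and remembering that $\nu\neq 0$ is what lets one conclude (c). The argument is essentially a catalogue of the Noether-type identities associated with the translational and dilational symmetries of the whole-plane Hamiltonian.
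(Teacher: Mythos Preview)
Your proof is correct and follows essentially the same route as the paper: translation invariance for \eqref{eq:lem:properties_of_whole_plane:orthogonality_of_H_0}, differentiation of the scaling identity $H_0(\lambda z)=H_0(z)-\tfrac{L}{\pi}\log\lambda$ at $\lambda=1$ for \eqref{eq:lem:properties_of_whole_plane:nabla_H_times_z}, and the rotational ansatz combined with \eqref{eq:lem:properties_of_whole_plane:orthogonality_of_H_0} for \eqref{eq:lem:properties_of_whole_plane:center_of_vorticity_0}. The only cosmetic differences are that the paper states the higher-order case of \eqref{eq:lem:properties_of_whole_plane:orthogonality_of_H_0} in one line rather than by explicit induction, and in \eqref{eq:lem:properties_of_whole_plane:center_of_vorticity_0} it writes the identity as $M_{\vec\Gamma}Z(t)=\nu J_N M_{\vec\Gamma}\dot Z(t)=-\nu\nabla H_0(Z(t))$ (using $\nu^2=1$ and $J_N^2=-\id$) rather than multiplying the equation of motion by $J_N^{-1}$, but these are equivalent manipulations.
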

\begin{proof}
Property \eqref{eq:lem:properties_of_whole_plane:orthogonality_of_H_0} is a consequence of the invariance of $H_0$ with respect to mutual translations. Indeed $H_0(z+\hat{a})=H_0(z)$ for all $\hat{a}\in D$ implies $\nabla H_0(z)\in D^\perp$. Also the higher order derivatives are invariant under mutual translations, and therefore $D^kH_0(z)[v_1,\ldots,v_k]=0$ whenever one of the $v_j$ is contained in $D$. 

%

Equation \eqref{eq:lem:properties_of_whole_plane:nabla_H_times_z} follows by differentiation of
\[
H_0(\lambda z)=H_0(z)-\frac{L}{\pi}\log \lambda
\]
with respect to $\lambda$ at $\lambda=1$.

For the third property we use \eqref{eq:lem:properties_of_whole_plane:orthogonality_of_H_0} and the special shape of the solution $Z(t)=e^{-\nu J_N t}z_0$ with $\nu\in\{\pm 1\}$ to conclude
\begin{equation}\label{eq:identity_M_Gamma_z0_gradient_H0}
M_{\vec{\Gamma}} Z(t)=\nu J_NM_{\vec{\Gamma}}\dot{Z}(t)=-\nu \nabla H_0(Z(t))\in D^\perp.
\end{equation}
\end{proof}
Equation \eqref{eq:lem:properties_of_whole_plane:center_of_vorticity_0} is just one way of saying that the center of vorticity
\[
c_\Gamma=\frac{1}{\Gamma}\sum_{j=1}^N\Gamma_j Z_j(t),
\]
which is conserved along general solutions of \eqref{eq:n_vortex_whole_plane}, vanishes.

Now we look at the linearization of \eqref{eq:n_vortex_whole_plane} along $Z(t)$, which can be written as
\begin{equation}\label{eq:linearization_of_whole_plane_with_matrix_A}
\dot{v}=M_{\vec{\Gamma}}^{-1}J_N\nabla^2H_0(Z(t))v=:A_0(t)v.
\end{equation}
Let $X_0(t)\in\R^{2N\times 2N}$ denote the fundamental solution of \eqref{eq:linearization_of_whole_plane_with_matrix_A}, i.e.,
\[
\dot{X}_0(t)=A_0(t)X_0(t),\quad X_0(0)=\id_{\R^{2N}}.
\]
\begin{lemma}\label{lem:floquet_solutions_of_whole_plane_case}
The functions $\hat{a}\in D$, $\dot{Z}(t)$ and $Z(t)-2t\dot{Z}(t)$ solve the linearized equation \eqref{eq:linearization_of_whole_plane_with_matrix_A}.
As a consequence for every $\hat{a}\in D$, $t\in \R$ there holds
\begin{gather*}
X_0(t)\hat{a}=\hat{a},\quad X_0(t)J_Nz_0=e^{-\nu J_Nt}J_N z_0,\quad X_0(t)z_0=e^{-\nu J_Nt}(z_0+2t\nu J_Nz_0).
\end{gather*}
\end{lemma}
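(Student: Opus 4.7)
The plan is to verify directly that each of the three proposed functions solves the linear system \eqref{eq:linearization_of_whole_plane_with_matrix_A}, and then to deduce the three identities for $X_0(t)$ from uniqueness of solutions by tracking initial conditions.

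For $\hat{a}\in D$ both sides of \eqref{eq:linearization_of_whole_plane_with_matrix_A} vanish: the left-hand side because $\hat{a}$ is constant in $t$, and the right-hand side because $\nabla^2 H_0(z)\hat{a}=0$ for every $\hat{a}\in D$ (this was observed in the proof of Lemma \ref{lem:properties_of_whole_plane_hamiltonian} as a consequence of translation invariance). For $\dot{Z}(t)$, one simply differentiates the relation $M_{\vec{\Gamma}}\dot{Z}(t)=J_N\nabla H_0(Z(t))$ once in $t$ and obtains exactly \eqref{eq:linearization_of_whole_plane_with_matrix_A} with $v=\dot{Z}$.

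The only nontrivial check, and the place where I expect the main work to lie, is the function $w(t):=Z(t)-2t\dot{Z}(t)$. The key algebraic input is an Euler-type homogeneity identity for $H_0$: differentiating $H_0(\lambda z)=H_0(z)-\frac{L}{\pi}\log\lambda$ first in $z$ and then in $\lambda$, and evaluating at $\lambda=1$, yields $\nabla^2 H_0(z)z=-\nabla H_0(z)$. Combined with $M_{\vec{\Gamma}}\ddot{Z}=J_N\nabla^2 H_0(Z)\dot{Z}$, this makes $M_{\vec{\Gamma}}\dot w - J_N\nabla^2 H_0(Z)w$ vanish: the two $-2t\dot{Z}$-derived terms cancel by the identity for $\ddot Z$, while the remaining terms $-M_{\vec{\Gamma}}\dot Z=-J_N\nabla H_0(Z)$ and $-J_N\nabla^2 H_0(Z)Z$ cancel by the Euler identity.

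To conclude the three formulas for $X_0(t)$, I read off initial values and invoke uniqueness. The constant solution $\hat{a}$ gives $X_0(t)\hat{a}=\hat{a}$. Since $\dot{Z}(t)=-\nu J_N e^{-\nu J_N t}z_0$ has initial value $-\nu J_N z_0$, multiplying both sides of $X_0(t)(-\nu J_N z_0)=\dot Z(t)$ by $-\nu$ and using that $J_N$ commutes with $e^{-\nu J_N t}$ rearranges to $X_0(t)J_N z_0=e^{-\nu J_N t}J_N z_0$. Finally $w(0)=z_0$, hence $X_0(t)z_0=w(t)=e^{-\nu J_N t}(z_0+2t\nu J_N z_0)$, again using the commutativity of $J_N$ and $e^{-\nu J_N t}$.
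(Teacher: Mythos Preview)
Your proof is correct. The treatment of $\hat{a}\in D$, of $\dot Z$, and the deduction of the three $X_0(t)$ formulas from initial values and uniqueness all match the paper. The one genuine difference is in how you verify that $w(t)=Z(t)-2t\dot Z(t)$ solves the linearization. The paper argues conceptually: the whole-plane system is invariant under the scaling $z(t)\mapsto r\,z(t/r^2)$, so each $Z_r(t)=rZ(t/r^2)$ is again a solution of \eqref{eq:n_vortex_whole_plane}, and differentiating this one-parameter family at $r=1$ produces $w(t)$ as a solution of the variational equation. You instead compute directly, using the Euler-type identity $\nabla^2 H_0(z)z=-\nabla H_0(z)$, which is exactly the infinitesimal form of the scaling behaviour $\lambda\nabla H_0(\lambda z)=\nabla H_0(z)$. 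The paper's route is more transportable to other scaling-invariant problems and avoids any computation; your route is self-contained and makes the algebraic cancellation explicit. Either is perfectly acceptable here.
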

\begin{proof}
By \eqref{eq:lem:properties_of_whole_plane:orthogonality_of_H_0} the constant functions $\hat{a}\in D$ are solutions of \eqref{eq:linearization_of_whole_plane_with_matrix_A}. The derivative $\dot{Z}(t)$ is the canonical Floquet solution and $Z(t)-2t\dot{Z}(t)$ is a solution, since \eqref{eq:n_vortex_whole_plane} is invariant with respect to the scaling $z(t)\rightarrow r z(t/r^2)$. More precisely, with $Z(t)$ also every $Z_r(t)=r Z(t/r^2)$ is a solution of \eqref{eq:n_vortex_whole_plane}. Therefore 
\[
\frac{d}{d_r}_{|r=1}Z_r(t)=Z(t)-2t\dot{Z}(t)
\]
is a solution of the linearized equation. 
\end{proof}
The eigenvalues of the monodromy operator $X_0(2\pi)$ are the Floquet multipliers associated to $Z(t)$. Lemma \ref{lem:floquet_solutions_of_whole_plane_case} shows that elements of $D\oplus \R J_Nz_0$ are eigenvectors for the multiplier $1$. Note that $J_Nz_0$ is not contained in $D$, since $z_0\in \cF_N(\R^2)$. The nondegeneracy condition introduced in Definition \ref{def:types_of_periodic_solutions} says that the eigenspace $\ker (X_0(2\pi)-\id_{\R^{2N}})$ is exactly given by $D\oplus \R J_Nz_0$. By Lemma \ref{lem:floquet_solutions_of_whole_plane_case} we also know $(X_0(2\pi)-\id_{\R^{2N}})z_0=4\pi\nu J_Nz_0$ and thus $(X_0(2\pi)-\id_{\R^{2N}})^2z_0=0$. Hence $z_0$ is contained in the generalized eigenspace associated to the multiplier $1$ without being an actual eigenvector. 
This shows that every algebraic nondegenerate relative equilibrium solution is a nondegenerate relative equilibrium in the sense of Definition \ref{def:types_of_periodic_solutions} b).

\begin{remark}\label{rem:transpose_of_monodromy_operator} Since equation \eqref{eq:linearization_of_whole_plane_with_matrix_A} is Hamiltonian with respect to $\omega_{\vec{\Gamma}}$, the matrices $X_0(t)$ are symplectic, i.e., $\omega_{\vec{\Gamma}}(X_0(t)\cdot,X_0(t)\cdot)=\omega_{\vec{\Gamma}}$. By the definition of $\omega_{\vec{\Gamma}}$ in \eqref{eq:definition_of_omega_Gamma} we conclude
\[
X_0(t)^T=-J_NM_{\vec{\Gamma}}X_0(t)^{-1}J_NM_{\vec{\Gamma}}^{-1}.
\]
Therefore Lemma \ref{lem:floquet_solutions_of_whole_plane_case} implies 
\[
X_0(2\pi)^TM_{\vec{\Gamma}}z_0=M_{\vec{\Gamma}}z_0,\quad X_0(2\pi)^TM_{\vec{\Gamma}}J_Nz_0=M_{\vec{\Gamma}}J_Nz_0+4\pi\nu M_{\vec{\Gamma}}z_0.
\]
\end{remark}

\begin{lemma}\label{lem:algebraic_nondegenerate_implies_L_not_0}
If $Z(t)$ is algebraic nondegenerate, then necessarily $L\neq 0$ and $\Gamma\neq 0$.
\end{lemma}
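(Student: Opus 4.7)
The plan is to leverage the standard fact from symplectic linear algebra that, for any symplectic matrix $X$, the symplectic form restricts to a \emph{non-degenerate} form on each generalized eigenspace $E_\lambda = \ker(X-\lambda\id)^\infty$ associated to a self-reciprocal eigenvalue $\lambda = \lambda^{-1}$; this rests on the duality $\omega(E_\lambda,E_\mu) = 0$ whenever $\lambda\mu \neq 1$ (see e.g.\ \cite{meyer_introduction_2009}). Under algebraic nondegenerateness the generalized eigenspace $E_1 := \ker(X_0(2\pi) - \id_{\R^{2N}})^\infty$ of the Floquet multiplier $1$ is $4$-dimensional, so the restricted form $\omega_{\vec{\Gamma}}|_{E_1}$ has an explicit $4\times 4$ Gram matrix whose non-degeneracy will translate directly into the desired conclusion $\Gamma L \neq 0$.

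First I would identify $E_1$ explicitly. From Lemma~\ref{lem:floquet_solutions_of_whole_plane_case} the subspace
\[
W := D \oplus \R J_N z_0 \oplus \R z_0
\]
is contained in $\ker(X_0(2\pi)-\id)^2 \subseteq E_1$. Since $z_0 \in \cF_N(\R^2)$ forces $z_0 \notin D$, $J_N$ has no real eigenvalues (hence $J_N z_0\notin \R z_0$ and $J_N z_0 \notin D$), and $(X_0(2\pi)-\id)z_0 = 4\pi\nu J_N z_0 \neq 0$ rules out $z_0 \in D \oplus \R J_N z_0$, the sum defining $W$ is direct, so $\dim W = 4 = \dim E_1$, giving $E_1 = W$.

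Next I compute the Gram matrix of $\omega_{\vec{\Gamma}}$ on $E_1$ in the basis $(\hat e_1, \hat e_2, J_N z_0, z_0)$, where $\hat e_i \in D$ denote the two coordinate translations. Using $M_{\vec{\Gamma}} z_0 \in D^\perp$ from Lemma~\ref{lem:properties_of_whole_plane_hamiltonian} together with $J_N M_{\vec{\Gamma}} = M_{\vec{\Gamma}} J_N$ and orthogonality of $J_N$, the four cross-pairings between $D$ and $\spann\{J_N z_0, z_0\}$ all vanish, while the intra-block entries come out to $\omega_{\vec{\Gamma}}(\hat e_1, \hat e_2) = \Gamma$ and $\omega_{\vec{\Gamma}}(J_N z_0, z_0) = \ska{M_{\vec{\Gamma}} z_0, z_0} = I(z_0)$. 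Substituting $\nabla H_0(z_0) = -\nu M_{\vec{\Gamma}} z_0$ from \eqref{eq:identity_M_Gamma_z0_gradient_H0} into $\ska{\nabla H_0(z_0), z_0} = -L/\pi$ of Lemma~\ref{lem:properties_of_whole_plane_hamiltonian} then gives $I(z_0) = L/(\pi\nu)$. The resulting block-diagonal Gram matrix therefore has determinant $\Gamma^2 L^2/\pi^2$, and non-degeneracy of $\omega_{\vec{\Gamma}}|_{E_1}$ forces $\Gamma \neq 0$ and $L \neq 0$. I anticipate no serious obstacle: the only conceptual step is invoking the symplectic-pairing fact correctly, and the remaining work is the routine dimension count for $E_1=W$ plus the bilinear bookkeeping.
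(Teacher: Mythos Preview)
Your proof is correct and follows a genuinely different route from the paper. The paper treats the two conclusions by separate, somewhat ad hoc arguments: for $L$ it constructs an $X_0(2\pi)$-invariant splitting $\R^{2N}=W\oplus W^\perp$ with $W=M_{\vec{\Gamma}}\spann\{z_0,J_Nz_0\}$ and checks that $L=0$ pushes both $D$ and $\spann\{z_0,J_Nz_0\}$ into $W^\perp$, forcing the multiplier $1$ to have multiplicity at least $6$; for $\Gamma$ it invokes a theorem of Moser--Zehnder on integrals in involution, noting that $\Gamma=0$ makes the linear momenta $Q,P$ Poisson-commute and again yields multiplicity $\geq 6$. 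Your approach unifies both in one stroke: once $E_1$ is identified with $D\oplus\R J_Nz_0\oplus\R z_0$ and one invokes the standard fact that the generalized eigenspace of a self-reciprocal eigenvalue of a symplectic map is a symplectic subspace, the block Gram matrix of $\omega_{\vec{\Gamma}}|_{E_1}$ has determinant $\Gamma^2L^2/\pi^2$, and non-degeneracy does the rest. Your argument is shorter and more conceptual; the paper's, while longer, is self-contained (no appeal to the symplectic eigenspace lemma) and exhibits the concrete degeneration mechanism, which has some expository value. One cosmetic point: the reason $J_Nz_0\notin D$ is not that $J_N$ lacks real eigenvalues but rather that $D$ is $J_N$-invariant together with $z_0\notin D$; the conclusion stands regardless.
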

\begin{proof} 
We first show that $L\neq 0$.
For $N=2$, we have $L=\Gamma_1\Gamma_2\neq 0$. We therefore consider $N\geq 3$.

Let $V=\R z_0\oplus \R J_N z_0$ and $W=M_{\vec{\Gamma}}V$. We are going to express the monodromy operator $X_0(2\pi)$ according to the splitting $\R^{2N}=W\oplus W^\perp$. By Remark \ref{rem:transpose_of_monodromy_operator} the plane $W$ is invariant under $X_0(2\pi)^T$, which implies that $W^\perp$ is invariant under $X_0(2\pi)$. Thus with respect to the splitting $W\oplus W^\perp$ the monodromy operator $X_0(2\pi)$ is expressed by a block matrix
\[
\begin{pmatrix}
A & 0\\
B & C
\end{pmatrix},
\]
where $A\in\R^{2\times 2}$ and  $C$ is the restriction of $X_0(2\pi)$ to $W^\perp$.

We compute the matrix $A$ with respect to the basis $M_{\vec{\Gamma}}z_0,M_{\vec{\Gamma}}J_Nz_0$ of $W$. Note that this is a orthogonal basis due to the block diagonal structure of $M_{\vec{\Gamma}}$ and $J_N$. We write
\begin{align*}
X_0(2\pi)M_{\vec{\Gamma}}z_0&=\alpha M_{\vec{\Gamma}}z_0+\beta M_{\vec{\Gamma}}J_Nz_0+w_1,\\
X_0(2\pi)M_{\vec{\Gamma}}J_Nz_0&=\gamma M_{\vec{\Gamma}}z_0+\delta M_{\vec{\Gamma}}J_Nz_0+w_2
\end{align*} 
with $w_1,w_2\in W^\perp$. The coefficients $\alpha,\beta,\gamma,\delta$ can then be computed by taking inner products and the use of Remark \ref{rem:transpose_of_monodromy_operator}. For example
\begin{align*}
\beta \abs{M_{\vec{\Gamma}}J_Nz_0}^2&=\ska{X_0(2\pi)M_{\vec{\Gamma}}z_0,M_{\vec{\Gamma}}J_Nz_0}=\ska{M_{\vec{\Gamma}}z_0,X_0(2\pi)^TM_{\vec{\Gamma}}J_Nz_0}\\
&=\ska{M_{\vec{\Gamma}}z_0,M_{\vec{\Gamma}}J_Nz_0+4\pi\nu M_{\vec{\Gamma}}z_0}=4\pi\nu \abs{M_{\vec{\Gamma}}z_0}^2
\end{align*}
shows that $\beta=4\pi\nu$. In a similar way we get
\[
A=\begin{pmatrix}
\alpha & \gamma\\
\beta &\delta
\end{pmatrix}=\begin{pmatrix}
1 & 0\\
4\pi\nu & 1
\end{pmatrix}.
\]
Hence the multiplicity of the Floquet multiplier $1$ of $Z(t)$ is given by $2+\nu_1(C)$, where $\nu_1(C)$ denotes the multiplicity of 
the eigenvalue $1$ of $C$, i.e., of the restriction $X_0(2\pi)_{|W^\perp}$. By Lemma \ref{lem:floquet_solutions_of_whole_plane_case} we know that the $4$-dimensional space $D\oplus V$ is contained in the generalized eigenspace of $X_0(2\pi)$ associated to the eigenvalue $1$. The space $D$ is always contained in $W^\perp$, due to \eqref{eq:lem:properties_of_whole_plane:center_of_vorticity_0} and the invariance of $D$ under $J_N$. To finish the proof we will see that under the condition $L=0$ the inclusion $V\subset W^\perp$ holds as well. In consequence $\nu_1(C)\geq 4$, such that the multiplier $1$ of $Z(t)$ has at least multiplicity $6$. 

Indeed, if $L=0$, the identities \eqref{eq:lem:properties_of_whole_plane:nabla_H_times_z} and $M_{\vec{\Gamma}}z_0=-\nu \nabla H_0(z_0)$, cf. \eqref{eq:identity_M_Gamma_z0_gradient_H0}, imply
\begin{align*}
\ska{z_0,M_{\vec{\Gamma}}z_0}=\ska{z_0,-\nu \nabla H_0(z_0)}=\frac{\nu L}{\pi}=0.
\end{align*}
Regarding the other inner products we have
\[
\ska{J_Nz_0,M_{\vec{\Gamma}}J_Nz_0}=\ska{z_0,M_{\vec{\Gamma}}z_0}=0,\quad \ska{J_Nz_0,M_{\vec{\Gamma}}z_0}=-\ska{z_0,M_{\vec{\Gamma}}J_Nz_0}=0
\]
by the block diagonal structure of the matrices $M_{\vec{\Gamma}}$, $J_N$ and therefore $V\subset W^\perp$.

It remains to show that $\Gamma=0$ implies the algebraic degenerateness of $Z(t)$. Again we consider $N\geq 3$, since for $N=2$ the vortex pair with $\Gamma_1=1$, $\Gamma_2=-1$ is rigidly translating and not rotating. 

We use Lemma 2.6 of \cite{moser_notes_2005}, which says that if $\dot{x}=X_{H_1}(x)$ is a Hamiltonian system with $r$ integrals $H_1,\ldots,H_r$, such that $\nabla H_1,\ldots,\nabla H_r$ are linearly independent and $\set{H_\alpha,H_\beta}=0$ for $\alpha=1,\ldots,r$, $\beta=1,\ldots,s$, then any closed orbit of the system has the Floquet multiplier $1$ with multiplicity at least $r+s$. Here $\set{\cdot,\cdot}$ denotes the Poisson bracket associated to the symplectic form.

In our case, the system \eqref{eq:n_vortex_whole_plane} has besides the Hamiltonian $H_0(z)$ itself the two components of the center of vorticity
\[
Q(z)=\sum_{j=1}^N\Gamma_j x_j,\quad P(z)=\sum_{j=1}^N\Gamma_j y_j,\quad z_j=(x_j,y_j)
\]
as first integrals. At least locally near the orbit of $Z(t)$ the gradients are linearly independent, since $z_0\notin D$ and
\[
\nabla H_0(z_0)=-\nu M_{\vec{\Gamma}}z_0,\quad \nabla Q(z_0)=M_{\vec{\Gamma}}\hat{e}_1,\quad \nabla P(z_0)=M_{\vec{\Gamma}}\hat{e}_2,
\]
where $e_1=(1,0)^T$, $e_2=(0,1)^T$. If $F:\R^{2N}\rightarrow\R$ is a $C^1$ function, the symplectic vectorfield $X^{\vec{\Gamma}}_F$ induced by $\omega_{\vec{\Gamma}}$ reads $J_NM_{\vec{\Gamma}}^{-1}\nabla F$. Therefore the Poisson bracket is given by
\[
\set{F_1,F_2}_{\vec{\Gamma}}=\omega_{\vec{\Gamma}}\left(X^{\vec{\Gamma}}_{F_1},X^{\vec{\Gamma}}_{F_2}\right)=\ska{\nabla F_1,J_NM_{\vec{\Gamma}}^{-1}\nabla F_2}.
\]
By \eqref{eq:lem:properties_of_whole_plane:orthogonality_of_H_0} we immediately see $\set{H_0,Q}_{\vec{\Gamma}}=0$, $\set{H_0,P}_{\vec{\Gamma}}=0$ on all of $\cF_N(\R^2)$. Furthermore, $\set{Q,P}_{\vec{\Gamma}}=\Gamma=0$ by our assumption.
Hence the three integrals are in involution and the multiplier $1$ for $Z(t)$ has multiplicity greater or equal $6$.
\end{proof}
\begin{remark}
The quantities $\Gamma$ and $L$ can not vanish at the same time since $\Gamma^2=2L+\sum_{j=1}^N\Gamma_j^2$.

The equilateral triangle with vorticities $\Gamma_1=1$, $\Gamma_2=1$, $\Gamma_3=-\frac{1}{2}$ is an example for a configuration with $L=0$, cf. Example \ref{ex:configurations} b).

An example of a rotating configuration with $\Gamma=0$ can also be realised with $3$ vortices. Here one takes $\Gamma_1=1$, $\Gamma_2=\Gamma_3=-\frac{1}{2}$ at the positions $z_1=0$, $z_2=(\rho,0)$, $z_3=-z_2$, $\rho>0$, such that the two vortices with negative strengths rigidly rotate around the central vortex with a frequency depending on $\rho$.

Another example with $\Gamma =0$ is the Rhombus configuration with parameter value $y=\sqrt{3+\sqrt{8}}$, cf. Example \ref{ex:configurations} c). See also Section \ref{subse:examples} below.
\end{remark}

Different proofs of Lemma \ref{lem:floquet_solutions_of_whole_plane_case} and of the part concerning $L\neq 0$ in Lemma \ref{lem:algebraic_nondegenerate_implies_L_not_0} can also be found in the paper by Roberts \cite{roberts_stability_2013}.
\subsection{Examples}\label{subse:examples}
a) Let $N=2$, $\Gamma\neq 0$. Two vortices rigidly rotate around the center of vorticity with frenquency $\nu=\frac{\Gamma}{\pi D^2}$, where $D=\abs{Z_1(t)-Z_2(t)}$ is the distance between the vortices. By Lemma \ref{lem:floquet_solutions_of_whole_plane_case} we know the whole monodromy matrix in this situation. Since there is no other multiplier than $1$, vortex pairs are LRE-stable.

b) Let $N=3$, $\Gamma\neq 0$. Then every equilateral triangle is a rigidly rotating configuration. Via scaling it can be assumed that the frequency $\nu$ is given by $\nu=\frac{\Gamma}{3}$. Recovering the spectral stability result of Synge \cite{synge_motion_1949}, Roberts has shown in \cite{roberts_stability_2013} that for this frequency the nontrivial Floquet multipliers are $\lambda_\pm =e^{\pm\frac{2\pi}{\nu}\sqrt{\frac{-L}{3}}}$. For the stability result in Theorem \ref{thm:linear_stability} part c) we need $\lambda_\pm\in S^1\setminus\{\pm 1\}$. This implies $L>0$ and $\sqrt{\frac{L}{3}}\notin \frac{\nu}{2}\Z=\frac{\Gamma}{6}\Z$. If we suppose that 
\[
\frac{L}{3}=\frac{1}{36}\Gamma^2k^2=\frac{1}{36}(\Gamma_1^2+\Gamma_2^2+\Gamma_3^2+2L)k^2
\]
for some $k\in\Z$, then 
\[
2(6-k^2)L=(\Gamma_1^2+\Gamma_2^2+\Gamma_3^2)k^2
\]
implies $\abs{k}=1$ or $\abs{k}=2$. But then $10L=\Gamma_1^2+\Gamma_2^2+\Gamma_3^2$ or $L=\Gamma_1^2+\Gamma_2^2+\Gamma_3^2$. The triangle is therefore LRE-stable, when $L>0$ and none of the two equations hold. 

In a similar way one sees that the triangle is algebraic nondegenerate as long as $L\in\R\setminus\set{0, \Gamma_1^2+\Gamma_2^2+\Gamma_3^2}$. Note that the condition $L=\Gamma_1^2+\Gamma_2^2+\Gamma_3^2$ is equivalent to $\Gamma_1=\Gamma_2=\Gamma_3$. Indeed with $\alpha=(\Gamma_1,\Gamma_3,\Gamma_2)^T$, $\beta=(\Gamma_2,\Gamma_1,\Gamma_3)^T$ we get
\[
L=\ska{\alpha,\beta}\leq \abs{\alpha}\abs{\beta}=\Gamma_1^2+\Gamma_2^2+\Gamma_3^2.
\]
The inequality is strict unless $\alpha$ and $\beta$ are linearly dependent. Since $\abs{\alpha}=\abs{\beta}$, this implies $\alpha=\pm\beta$. In fact $\alpha=-\beta$ is not possible, hence $\alpha=\beta$ and $\Gamma_1=\Gamma_2=\Gamma_3$.

c) Let $N=4$ and $y>\frac{1}{\sqrt{3}}$. The rhombus configuration consisting of the vortices $\Gamma_1=\Gamma_2=1$ and $\Gamma_3=\Gamma_4=\kappa(y)=\frac{3y^2-y^4}{3y^2-1}$ at  $z_1=\pi^{-\frac{1}{2}}(-1,0)$, $z_2=\pi^{-\frac{1}{2}}(1,0)$, $z_3=\pi^{-\frac{1}{2}}(0,-y)$, $z_4=\pi^{-\frac{1}{2}}(0,y)$ rotates with frequency 
\[
\nu(y)=\frac{1}{2}+\frac{2\kappa(y)}{y^2+1}.
\]
The existence of this configuration has been shown in \cite{hampton_relative_2014}, its stability has been investigated in \cite{roberts_stability_2013}. Note that our Hamiltonian differs by a factor of $\pi^{-1}$ from the Hamiltonian used in \cite{hampton_relative_2014,roberts_stability_2013}, which is why we included the factor $\pi^{-\frac{1}{2}}$.
It follows from \cite{roberts_stability_2013} that besides the fourfold multiplier $1$ the four nontrivial Floquet multipliers are given by
\[
\lambda^\pm_j(y)=e^{\pm \frac{2\pi i}{\nu(y)}\sqrt{\nu(y)^2-\mu_j(y)^2}}, \quad j=1,2,
\]
where
\begin{align*}
\mu_1(y)&=\frac{7y^4-18y^2+7}{2(y^2+1)(3y^2-1)},\\
\mu_2(y)&=\frac{2(y^2-1)(y^2+2y-1)(y^2-2y-1)}{(y^2+1)^2(3y^2-1)}.
\end{align*}

For $y=1$ we get $\kappa(1)=1$ and recover the regular $4$-Gon rotating with frequency $\nu(1)=\frac{3}{2}$. The associated multipliers are 
\[
\lambda^\pm_1(1)=e^{\pm \frac{4\sqrt{2}\pi i}{3}},\quad \lambda^\pm_2(1)=1,
\]
which shows that the $4$-Gon configuration is degenerate. Note that the multiplier $\lambda_1^+(1)$ lies in the open circle segment between $e^{-\frac{\pi}{3}i}$ and $1$. Since the multipliers depend continuously on $y$, the same is true for $\lambda_1^+(y)$ with $y$ close to $1$.

Regarding the second pair of multipliers $\lambda_2^\pm(y)=\exp\left(\pm 2\pi i\sqrt{1-\frac{\mu_2(y)^2}{\nu(y)^2}}\right)$ we see that $\lambda_2^\pm(y)\neq 1$ for $1\neq y\approx 1$, since $\frac{\mu_2(y)}{\nu(y)}$ is a non-constant rational function vanishing at $y=1$.
 This way we find $\delta>0$, such that $\lambda_1^\pm(y),\lambda_2^\pm(y)\in S^1\setminus\{\pm 1\}$ and $\lambda_1^+(y)\neq \lambda_2^\pm(y)$ for $y\in(1-\delta,1+\delta)\setminus\{1\}$. In other words the rhombus configuration with $y\in(1-\delta,1+\delta)\setminus\{1\}$ is LRE-stable.

\subsection{The existence result revised}\label{subsec:the_existence_result_revised}
Let $g$ be of class $\cC^m$, $m\geq 2$ and $\Gamma_1,\ldots,\Gamma_N$, $a_0$ and $Z(t)$ be as in Theorem \ref{thm:existence_result}. For the critical point $a_0$ we assume $a_0=0$ throughout the rest of the paper.

A crucial step of the proof of Theorem \ref{thm:existence_result} is a rescaling of the problem. Let $F:\Omega^N\rightarrow\R$,
\[
F(z)=\sum_{j,k=1}^N\Gamma_k\Gamma_jg(z_j,z_k).
\] 
The ansatz $z(t)=ru(t/r^2)$, $r>0$ shows that $z$ solves system \eqref{eq:n_vortex_type_system}, if and only if $u$ is a solution of
\begin{equation}\label{eq:n_vortex_with_parameter}
M_{\vec{\Gamma}} \dot{u}=J_N\nabla H_r(u),
\end{equation}
where $H_r:\cF_N\big(\Omega/r\big)\rightarrow\R$,
\begin{align*}
H_r(u)&=-\frac{1}{2\pi}\sum_{\underset{k\neq j}{k,j=1}}^N\Gamma_k\Gamma_j\log\abs{u_j-u_k}-\sum_{j,k=1}^N\Gamma_k\Gamma_jg(ru_j,ru_k)+F(0)\\
&=H_0(u)-F(ru)+F(0).
\end{align*}
Thus the scaling provides a family of equivalent Hamiltonian systems extending to the whole-plane system \eqref{eq:n_vortex_whole_plane} as $r\rightarrow 0$. In fact the map 
\[ 
\set{(r,u)\in\R^{1+2N}:r\geq 0,~ru\in\Omega^N,~u\in\cF_N(\R^2)}\rightarrow\R,~(r,u)\mapsto H_r(u)
\]
is of class $\cC^{m}$.

By our assumption we know that $Z(t)$ is a $2\pi$-periodic solution of \eqref{eq:n_vortex_whole_plane}, which is supposed to be as nondegenerate as possible, i.e., the space of $2\pi$-periodic solutions of the linearization \eqref{eq:linearized_system_on_whole_plane} is exactly given by $D\oplus\R\dot{Z}$, cf. Section \ref{subsec:the_whole_plane_case}. It can be shown that the degeneracy given by the mutual translations $D$ can be compensated by the nondegenerate critical point $a_0=0$ of the generalized Robin function $h(x)=g(x,x)$. This way the $2\pi$-periodic solution $Z(t)$ of \eqref{eq:n_vortex_with_parameter} at $r=0$ can be continued to $2\pi$-periodic solutions of \eqref{eq:n_vortex_with_parameter} for $r>0$ small. The proof uses the associated variational structure on the Sobolev space $H^1(\R/2\pi\Z,\R^{2N})=H^1_{2\pi}$ of $2\pi$-periodic continuous functions having a square-integrable derivative.
\begin{proposition}\label{prop:existence_result_detailled}[\cite{gebhard_periodic_2017}, Theorem 1.9]
Under the conditions of Theorem \ref{thm:existence_result} there exists $r_0>0$ and a $\cC^{m-2}$-map $[0,r_0)\ni r\mapsto u^{(r)}\in H^1_{2\pi}$, such that $u^{(r)}$ solves \eqref{eq:n_vortex_with_parameter} and $u^{(0)}=Z$. Moreover, restricted to $(0,r_0)$ the map is of class $\cC^{m-1}$ and if $m\geq 3$, then 
\begin{equation}\label{eq:derivative_of_u_r}
\partial_ru^{(0)}=\frac{d}{dr}_{|r=0}u^{(r)}\in D.
\end{equation}
\end{proposition}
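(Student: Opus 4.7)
The plan is to apply the implicit function theorem through a two-stage Lyapunov--Schmidt reduction. Set $G\colon [0,r_0)\times H^1_{2\pi}\to L^2_{2\pi}$, $G(r,u)=M_{\vec{\Gamma}}\dot{u}-J_N\nabla H_r(u)$; this map is of class $\cC^{m-1}$ jointly, and $G(0,Z)=0$. Its $u$-linearization at $(0,Z)$ is the Fredholm-index-zero operator $L_0w=M_{\vec{\Gamma}}\dot{w}-J_N\nabla^2 H_0(Z)w$, whose kernel, by the nondegeneracy hypothesis on $Z$, equals exactly $K=D\oplus\R\dot{Z}$. Direct IFT therefore fails: the $\R\dot{Z}$-component of the kernel reflects the autonomous nature of the system, while the $D$-component reflects the mutual-translation invariance of $H_0$, a symmetry that is broken for $r>0$ and whose breakage is controlled by the critical-point structure of $h$ at $a_0=0$.

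To handle this, fix a closed complement $Y\subset H^1_{2\pi}$ of $K$, for instance $Y=K^\perp\cap\set{u\colon\ska{u,\dot{Z}}_{L^2}=0}$, the phase condition neutralising the $\R\dot{Z}$-direction. Decompose each nearby loop as $u=Z+\hat{a}+w$ with $\hat{a}\in D$ and $w\in Y$, and let $P$ be the $L^2$-projection onto $\range L_0$ along $\ker L_0^*$. The projected equation $P\,G(r,Z+\hat{a}+w)=0$ has linearization $L_0|_Y\colon Y\to\range L_0$ at the base point, a Banach-space isomorphism, so the IFT in the parameters $(r,\hat{a})$ produces a $\cC^{m-1}$ map $w=w(r,\hat{a})$ with $w(0,\hat{a})\equiv 0$; the latter identity holds because $Z+\hat{a}$ still solves the $r=0$ system by translation invariance. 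Differentiating the projected equation at $r=0$ and observing that $\partial_r G(0,\cdot)\equiv 0$—a consequence of $\partial_r\nabla H_r|_{r=0}=-\nabla F(0)=0$, where in turn $\nabla F(0)=0$ follows from $\nabla h(0)=2\nabla_1 g(0,0)=0$—one also finds $\partial_r w(0,\hat{a})=0$, so in fact $w(r,\hat{a})=O(r^2)$ locally uniformly in $\hat{a}$.

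The remaining bifurcation equation $\Psi(r,\hat{a}):=(\id-P)\,G(r,Z+\hat{a}+w(r,\hat{a}))=0$ takes values in a two-dimensional space; after identifying $D\simeq\R^2$ via $\hat{a}=(a,\dots,a)\leftrightarrow a$, it becomes an equation $\Psi(r,a)=0$ in $\R^2$. Expanding $H_r(u)=H_0(u)-F(ru)+F(0)$, exploiting $\nabla F(0)=0$, and using $w=O(r^2)$ yields
\begin{equation*}
\Psi(r,a)=r^2\nabla h(a)+O(r^3),
\end{equation*}
the leading coefficient being obtained by pairing $J_N\nabla^2 F(0)\,(Z(t)+\hat{a})$ against a basis of $\ker L_0^*$ corresponding to the two mutual-translation directions and integrating over the period. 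Dividing by $r^2$ produces $\tilde\Psi(r,a)$ which extends continuously to $r=0$ with $\tilde\Psi(0,a)=\nabla h(a)$, and $D_a\tilde\Psi(0,0)=\nabla^2 h(0)$ is invertible by the nondegeneracy of $a_0=0$. A second IFT thus provides a branch $a=a(r)$ with $a(0)=0$; setting $u^{(r)}=Z+\hat{a}(r)+w(r,\hat{a}(r))$ gives the desired family. The regularity is $\cC^{m-1}$ on $(0,r_0)$, degraded to $\cC^{m-2}$ on $[0,r_0)$ because the normalisation $\tilde\Psi=r^{-2}\Psi$ costs two derivatives at the boundary.

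The derivative assertion \eqref{eq:derivative_of_u_r} is now immediate: $\partial_r u^{(0)}=\hat{a}'(0)+\partial_r w(0,0)=\hat{a}'(0)\in D$, since $\partial_r w(0,\cdot)\equiv 0$ was established above. The main technical obstacle is the algebraic identification of the leading coefficient of $\Psi$ with $\nabla h(a)$, which requires matching the $r^2$-perturbation $J_N\nabla^2 F(0)$ to a cokernel basis tied—via the symplectic pairing—to the translation vectors in $D$; once this matching is performed, the remaining steps are standard applications of the IFT and routine tracking of regularity.
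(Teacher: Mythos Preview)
The paper does not prove this proposition; it is quoted as \cite{gebhard_periodic_2017}, Theorem~1.9, with only the remark that the proof uses the variational structure on $H^1_{2\pi}$. Your Lyapunov--Schmidt outline is the natural approach and is almost certainly close to the cited argument. The identification $\ker L_0=D\oplus\R\dot Z$, the use of a phase condition, the reduction to a bifurcation equation controlled by $\nabla^2 h(0)$, and the deduction $\partial_r u^{(0)}\in D$ from $\partial_r w(0,\cdot)=0$ are all correct in spirit.

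Three points need attention. First and most seriously, your regularity count is inconsistent: starting from $G\in\cC^{m-1}$ and losing two derivatives when dividing $\Psi$ by $r^2$ yields $\cC^{m-3}$, not the claimed $\cC^{m-2}$. To recover the correct regularity you must exploit the factorisation
\[
G(r,u)=G(0,u)+r^2 J_N\int_0^1\nabla^2 F(tru)\,u\,dt
\]
(using $\nabla F(0)=0$), in which the integral is already $\cC^{m-2}$; carrying the explicit $r^2$ through the reduction builds $\tilde\Psi$ directly as a $\cC^{m-2}$ object rather than by naive division. Second, the cokernel $\ker L_0^*$ is three-dimensional, not two; the component of $(\id-P)G$ along $J_N\dot Z$ vanishes automatically via the identity $\int_0^{2\pi}\ska{J_N\dot u,G(r,u)}\,dt=-\int_0^{2\pi}\tfrac{d}{dt}H_r(u)\,dt=0$, but this deserves a line of justification rather than the bare assertion that $\Psi$ is two-dimensional. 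Third, your formula $\tilde\Psi(0,a)=\nabla h(a)$ is not quite right: the pairing produces a term proportional to $r\,\nabla h(ra)$, whose $r^{-2}$-limit is $\nabla^2 h(0)a$, linear in $a$. This does not damage the second implicit-function step---only the invertibility of $D_a\tilde\Psi(0,0)\propto\nabla^2 h(0)$ is needed---but the stated formula is inaccurate.
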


Since $H^1_{2\pi}$ embeds into the space of $2\pi$-periodic continuous functions and since the flow $(r,t,u_0)\mapsto\phi_r(t,u_0)$ associated to the parameter dependent family of Hamiltonian systems \eqref{eq:n_vortex_with_parameter} is of class $\cC^{m-1}$, we obtain that
\[
[0,r_0)\times\R\ni(r,t)\mapsto u^{(r)}(t)\in\R^{2N}
\]
is a $\cC^{m-2}$ mapping. Moreover, $\partial_r^{m-2}u^{(r)}$ is continuously differentiable with respect to $t$. Restricted to $(0,r_0)\times\R$ the map $(r,t)\mapsto u^{(r)}(t)$ is even of class $\cC^{m-1}$ and the solutions $\big(z^{(r)}\big)_{r\in(0,r_0)}$ of the unscaled equation \eqref{eq:n_vortex_type_system} in Theorem \ref{thm:existence_result} are given by $z^{(r)}(t)=ru^{(r)}(t/r^2)$.

The next Lemma shows that without restriction we can pass to a slightly time shifted version of the family $\big(u^{(r)}\big)_{r\in[0,r_0)}$. This will be useful in later computations.
\begin{lemma}\label{lem:admissible_time_shift}
If $\tau \in\cC^{m-2}([0,r_0))\cap\cC^{m-1}((0,r_0))$, $r\mapsto \tau_r$ with $\tau_0\in 2\pi\Z$ and  $\partial_r\tau_0=0$, then the time shifted family $\big(\tilde{u}^{(r)}\big)_{r\in[0,r_0)}$, $\tilde{u}^{(r)}=u^{(r)}(\cdot+\tau_r)$ has exactly the same properties as the original family $\big(u^{(r)}\big)_{r\in[0,r_0)}$ of Proposition \ref{prop:existence_result_detailled}.
\end{lemma}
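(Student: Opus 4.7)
The plan is to verify, one by one, that each of the four defining properties of the family $\big(u^{(r)}\big)$ listed in Proposition~\ref{prop:existence_result_detailled} is preserved when one replaces $u^{(r)}$ by $\tilde u^{(r)}=u^{(r)}(\cdot+\tau_r)$. First, system \eqref{eq:n_vortex_with_parameter} is autonomous for every fixed $r$, so any time translate of a solution is again a solution; consequently each $\tilde u^{(r)}$ is a $2\pi$-periodic solution of \eqref{eq:n_vortex_with_parameter}. Second, from $\tau_0\in 2\pi\Z$ and the $2\pi$-periodicity of $Z=u^{(0)}$ one reads off
\[
\tilde u^{(0)}(t)=u^{(0)}(t+\tau_0)=Z(t+\tau_0)=Z(t).
\]

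For the regularity statement I will not argue directly in the Banach space $H^1_{2\pi}$ (time translation is not a smooth action there) but instead through the pointwise regularity recorded after Proposition~\ref{prop:existence_result_detailled}: the joint map $(r,t)\mapsto u^{(r)}(t)$ is of class $\cC^{m-2}$ on $[0,r_0)\times\R$ and of class $\cC^{m-1}$ on $(0,r_0)\times\R$, and $\partial_r^{m-2}u^{(r)}$ is continuously differentiable in $t$. Composition with the map $r\mapsto \tau_r$, which has exactly the same two regularity classes on the same two sets, preserves the joint regularity of $(r,t)\mapsto u^{(r)}(t+\tau_r)$. Since $\tilde u^{(r)}$ and its $t$-derivative are both evaluations of such jointly regular maps, the curve $r\mapsto \tilde u^{(r)}\in H^1_{2\pi}$ has the asserted regularity.

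The last property, $\partial_r\tilde u^{(0)}\in D$, is where both assumptions on $\tau$ enter decisively. Applying the chain rule yields
\[
\partial_r\tilde u^{(r)}(t)=\big(\partial_r u^{(r)}\big)(t+\tau_r)+\dot u^{(r)}(t+\tau_r)\,\partial_r\tau_r,
\]
so that at $r=0$ the hypothesis $\partial_r\tau_0=0$ removes the drift term and leaves $\partial_r\tilde u^{(0)}(t)=(\partial_r u^{(0)})(t+\tau_0)$. By \eqref{eq:derivative_of_u_r} the function $\partial_r u^{(0)}$ is a \emph{constant} element of $D\subset \R^{2N}$, so the time shift is immaterial and $\partial_r\tilde u^{(0)}=\partial_r u^{(0)}\in D$.

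I do not foresee any genuine obstacle in the argument; the proof is essentially bookkeeping and a single application of the chain rule. The two assumptions on $\tau$ play complementary, clearly identifiable roles: $\tau_0\in 2\pi\Z$ is needed to preserve the initial condition $\tilde u^{(0)}=Z$, while $\partial_r\tau_0=0$ is needed to cancel the otherwise appearing drift $\dot Z(t)\,\partial_r\tau_0$ that would spoil property \eqref{eq:derivative_of_u_r}.
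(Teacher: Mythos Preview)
Your proof is correct and follows essentially the same approach as the paper's, which merely states in one line that $\tilde u^{(r)}$ has the same regularity, solves \eqref{eq:n_vortex_with_parameter}, satisfies $\tilde u^{(0)}=Z$, and $\partial_r\tilde u^{(0)}=\partial_r u^{(0)}\in D$. Your version is in fact more careful: you flag the potential issue that time translation is not a smooth action on $H^1_{2\pi}$ and circumvent it via the pointwise joint regularity of $(r,t)\mapsto u^{(r)}(t)$ recorded after Proposition~\ref{prop:existence_result_detailled}, whereas the paper glosses over this with a ``Clearly''.
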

\begin{proof}
Clearly $r\mapsto\tilde{u}^{(r)}$ has the same regularity  as $r\mapsto u^{(r)}$, $\tilde{u}^{(r)}$ solves \eqref{eq:n_vortex_with_parameter} and $\tilde{u}^{(0)}=Z$, $\partial_r\tilde{u}^{(0)}=\partial_r u^{(0)}\in D$.
\end{proof}

\subsection{The monodromy operator}\label{subsec:monodromy_operator}
Observe that the Floquet multipliers as well as further stability properties of $z^{(r)}$ as a $2\pi r^2$-periodic solution of \eqref{eq:n_vortex_type_system} are the same as the Floquet multipliers and stability properties of $u^{(r)}$ as a $2\pi$-periodic solution of \eqref{eq:n_vortex_with_parameter}. We therefore study for $r\in(0,r_0)$ the linearization 
\begin{equation}\label{eq:linearization_of_n_vort_with_parameter}
\dot{v}=M_{\vec{\Gamma}}^{-1}J_N\nabla^2H_r\big(u^{(r)}(t)\big)v=:A_r(t)v
\end{equation}
of \eqref{eq:n_vortex_with_parameter} along the $2\pi$-periodic solution $u^{(r)}$. Let $X_r(t)\in\R^{2N\times 2N}$, $r\in(0,r_0)$ denote the fundamental solution of \eqref{eq:linearization_of_n_vort_with_parameter}, i.e.,
\begin{equation}\label{eq:definition_of_fundamental_solution}
\dot{X}_r(t)=A_r(t)X_r(t),\quad X_r(0)=\id_{\R^{2N}}.
\end{equation}
 The definitions of $A_r$ and $X_r$ for $r>0$ fit the definition for $r=0$ in Section \ref{subsec:the_whole_plane_case} in the sense that $[0,r_0)\times\R\rightarrow \R^{2N\times 2N}$, $(r,t)\mapsto A_r(t)$ and $(r,t)\mapsto X_r(t)$ are $\cC^{m-2}$ maps. Additionally, $\partial_r^{m-2}X_r$ is continuously differentiable with respect to $t$ and satisfies a certain differential equation.

\begin{lemma}\label{lem:basic_properties_of_X_r_A_r}
Let $m\geq 4$. For $a\in\R^2$, $1\leq j<k$, $j+k\leq m$ there holds
\begin{gather}
\nabla F(\hat{a})=\Gamma M_{\vec{\Gamma}} \widehat{\nabla h(a)},\label{eq:lem:basic_properties_nabla_F}\\
\partial_r\nabla H_0=0,\quad\partial_r^2\nabla^2H_0(u)=-2\nabla^2F(0),\quad\partial_r^j\nabla^kH_0=0. \label{eq:lem:basic_properties_derivatives_of_H_r_0}
\end{gather}
\end{lemma}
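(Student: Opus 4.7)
The plan is to prove the four assertions in turn, all by direct computation using the splitting $H_r(u)=H_0(u)-F(ru)+F(0)$ together with the hypothesis $a_0=0$, which forces $\nabla h(0)=0$.

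First I would derive \eqref{eq:lem:basic_properties_nabla_F} block by block. Expanding $\nabla_{z_\ell}F$ from the defining double sum produces two contributions, one with $\nabla_1 g(z_\ell,z_k)$ and one with $\nabla_2 g(z_j,z_\ell)$. Using the symmetry in the form $\nabla_2 g(x,y)=\nabla_1 g(y,x)$ collapses these into $2\Gamma_\ell\sum_{k=1}^N\Gamma_k\nabla_1 g(z_\ell,z_k)$. Evaluating at $z=\hat a$ and using $\nabla h(a)=2\nabla_1 g(a,a)$ (once more by symmetry) produces the $\ell$-th block $\Gamma_\ell\Gamma\nabla h(a)$, which assembles into $\Gamma M_{\vec{\Gamma}}\widehat{\nabla h(a)}$.

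Next I would treat the statements on $\partial_r^j\nabla^k H_0$ by a single Leibniz computation. Applying the $u$-chain rule $k$ times to $F(ru)$ yields the factor $r^k$, so that
\[
\nabla^k H_r(u)=\nabla^k H_0(u)-r^k(\nabla^k F)(ru),
\]
and everything reduces to expanding $\partial_r^j\bigl[r^k(\nabla^k F)(ru)\bigr]\big|_{r=0}$ by Leibniz. Since $\partial_r^i r^k\big|_{r=0}=k!\,\delta_{i,k}$, only the index $i=k$ can contribute. For $1\le j<k$ this index is excluded, so the expression vanishes and $\partial_r^j\nabla^k H_0=0$. For $j=k=2$ the surviving term equals $2\nabla^2 F(0)$, which proves $\partial_r^2\nabla^2 H_0(u)=-2\nabla^2 F(0)$. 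Finally, the same computation with $j=k=1$ yields $\partial_r\nabla H_r(u)\big|_{r=0}=-\nabla F(0)$, and combining \eqref{eq:lem:basic_properties_nabla_F} at $a=0$ with the critical point condition $\nabla h(0)=0$ gives $\nabla F(0)=0$, as needed.

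There is no substantive obstacle here, the argument being essentially bookkeeping. The two subtle points are avoiding double counting of the diagonal $j=k=\ell$ term when using symmetry in the first step, and recognising that the identity $\partial_r\nabla H_0=0$ is the only assertion that actively uses the hypothesis $a_0=0$; for the remaining derivatives the vanishing comes purely from the power of $r$ sitting in front of $(\nabla^k F)(ru)$.
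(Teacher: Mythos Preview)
Your proposal is correct and follows essentially the same approach as the paper: both compute $\nabla_{z_\ell}F(\hat a)$ block by block via the symmetry of $g$, and both reduce the $r$-derivatives to the identity $\nabla^k H_r(u)=\nabla^k H_0(u)-r^k(\nabla^k F)(ru)$ together with $\nabla F(0)=0$. Your Leibniz formulation handles all the cases $\partial_r^j\nabla^k H_0$ in one stroke, whereas the paper just writes out $j=k=1$ explicitly and then says ``in a similar way'' for the rest, but this is a difference of presentation, not of method.
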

\begin{proof}
By the symmetry of $g$ and the definition of $h(x)=g(x,x)$ we obtain 
\[
\nabla_{z_j}F(z)_{|z=\hat{a}}=\Gamma_j\sum_{k=1}^N\Gamma_k2\nabla_1g(z_j,z_k)_{|z=\hat{a}}=\Gamma_j\Gamma\nabla h(a).
\]
Thus we conclude $\nabla F(\hat{a})=\Gamma M_{\vec{\Gamma}}\widehat{\nabla h(a)}$. In particular $\nabla F(0)=0$. 

Next we have $\nabla H_r(u)=\nabla H_0(u)-r\nabla F(ru)$, so differentiation with respect to $r$ at $r=0$ shows 
\[
\partial_r\nabla H_0(u)=-\big(\nabla F(ru)+r\nabla^2F(ru)u\big)_{|r=0}=0.
\]
In a similar way one sees that $\partial_r^2\nabla^2H_0(u)=-2\nabla^2 F(0)$ and that $\partial_r^j\nabla^kH_0(u)=0$ whenever $j<k$.
\end{proof}
\begin{lemma}\label{lem:expansion_of_monodromy_operator_on_D}
Let $m\geq 4$. As $r\rightarrow 0$ the expansion
\[
X_r(t)=X_0(t)+\frac{1}{2}r^2 X_0(t)\int_0^tX_0(s)^{-1}\partial_r^2A_0(s)X_0(s)\:ds+o(r^2)
\]
holds pointwise in $t\in\R$. In particular for $a\in\R^2$ we have
\[
X_r(t)\hat{a}=\hat{a}-r^2\Gamma t\big(J\nabla^2 h(0)a\big)\widehat{\phantom{|}}+o(r^2).
\]
\end{lemma}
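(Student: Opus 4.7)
The plan is to combine a Taylor expansion of the coefficient matrix $A_r(t)$ in $r$ with Duhamel's formula for \eqref{eq:linearization_of_n_vort_with_parameter}. Setting $B_r(t)=\nabla^2 H_r(u^{(r)}(t))$, the chain rule gives
\begin{equation*}
\partial_r B_r(t) = \partial_r\nabla^2 H_r(u^{(r)}(t)) + \nabla^3 H_r(u^{(r)}(t))\bigl[\partial_r u^{(r)}(t)\bigr].
\end{equation*}
At $r=0$ the first summand vanishes by \eqref{eq:lem:basic_properties_derivatives_of_H_r_0}, and the second vanishes because $\partial_r u^{(0)}\in D$ by \eqref{eq:derivative_of_u_r} combined with the translation-invariance principle from Lemma \ref{lem:properties_of_whole_plane_hamiltonian}, which kills $D^k H_0$ as soon as one of its arguments lies in $D$. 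Hence $\partial_r A_0 \equiv 0$ and
\begin{equation*}
A_r(t) = A_0(t) + \frac{r^2}{2}\partial_r^2 A_0(t) + o(r^2)
\end{equation*}
uniformly for $t$ in the compact interval $[0,2\pi]$.

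Next, writing $Y=X_0^{-1}X_r$ one obtains $\dot Y = X_0^{-1}(A_r-A_0)X_0\,Y$ with $Y(0)=\id$, so
\begin{equation*}
X_r(t) = X_0(t) + X_0(t)\int_0^t X_0(s)^{-1}\bigl(A_r(s)-A_0(s)\bigr)X_r(s)\,ds.
\end{equation*}
Plugging in the Taylor expansion above and replacing the remaining factor $X_r(s)$ by $X_0(s)+o(1)$, which is legitimate because $A_r-A_0=O(r^2)$ and $X_r$ depends continuously on $r$ uniformly on $[0,2\pi]$, yields the first expansion of the lemma.

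For the second formula I evaluate on $\hat a\in D$. Since $X_0(s)\hat a=\hat a$ by Lemma \ref{lem:floquet_solutions_of_whole_plane_case}, only $\partial_r^2 A_0(s)\hat a$ matters. Expanding $\partial_r^2 B_0$ produces the four summands
\begin{equation*}
\partial_r^2\nabla^2 H_0,\quad 2\partial_r\nabla^3 H_0[\partial_r u^{(0)}],\quad \nabla^4 H_0[\partial_r u^{(0)},\partial_r u^{(0)}],\quad \nabla^3 H_0[\partial_r^2 u^{(0)}].
\end{equation*}
The middle two terms vanish directly by \eqref{eq:lem:basic_properties_derivatives_of_H_r_0} and by $\partial_r u^{(0)}\in D$ combined with Lemma \ref{lem:properties_of_whole_plane_hamiltonian}; the last one, when contracted against $\hat a$, also has a $D$-argument and vanishes for the same reason. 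The first summand equals $-2\nabla^2 F(0)$ by \eqref{eq:lem:basic_properties_derivatives_of_H_r_0}, and differentiating \eqref{eq:lem:basic_properties_nabla_F} along $D$ directly gives $\nabla^2 F(0)\hat a = \Gamma M_{\vec{\Gamma}}\bigl(\nabla^2 h(0)a\bigr)\widehat{\phantom{|}}$. Using $M_{\vec{\Gamma}}^{-1}J_NM_{\vec{\Gamma}}=J_N$ one concludes
\begin{equation*}
\partial_r^2 A_0(s)\hat a = -2\Gamma\bigl(J\nabla^2 h(0)a\bigr)\widehat{\phantom{|}}\in D.
\end{equation*}
This vector is independent of $s$ and is fixed by both $X_0(s)^{-1}$ and $X_0(t)$, so the integral reduces to $-2\Gamma t\bigl(J\nabla^2 h(0)a\bigr)\widehat{\phantom{|}}$, giving the claimed second expansion.

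The main obstacle is establishing $\partial_r A_0 \equiv 0$: without it the leading correction would be of order $r$ instead of $r^2$, and the whole linear stability analysis built on top of this expansion would have to be redone. Once this is in place, the remaining computations are bookkeeping that relies only on the two vanishing mechanisms for derivatives of $H_0$, namely arguments lying in $D$ (Lemma \ref{lem:properties_of_whole_plane_hamiltonian}) and the identity $\partial_r^j\nabla^k H_0 = 0$ for $j<k$ from \eqref{eq:lem:basic_properties_derivatives_of_H_r_0}.
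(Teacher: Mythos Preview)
Your proof is correct and follows essentially the same route as the paper: both establish $\partial_r A_0\equiv 0$ from $\partial_r u^{(0)}\in D$ together with \eqref{eq:lem:basic_properties_derivatives_of_H_r_0}, then extract the second-order term. The only cosmetic difference is that you use Duhamel's formula for $Y=X_0^{-1}X_r$, whereas the paper differentiates the defining ODE of $X_r$ in $r$ and solves for $\partial_r^2 X_0$ by variation of constants; your treatment of the four summands in $\partial_r^2 B_0$ is also a bit more explicit than the paper's, but the vanishing mechanisms invoked are identical.
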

\begin{proof}
Recall that $r\mapsto X_r(t)$ is at least $\cC^2$ when $m\geq 4$, so we can expand $X_r(t)$ in second order. For the first derivative $\partial_rX_0(t)$ we use \eqref{eq:lem:properties_of_whole_plane:orthogonality_of_H_0}, \eqref{eq:lem:basic_properties_derivatives_of_H_r_0} and $\partial_ru^{(0)}\in D$, cf. \eqref{eq:derivative_of_u_r},  to conclude
\[
\partial_rA_0(t)=M_{\vec{\Gamma}}^{-1}J_N\big(\partial_r\nabla^2H_0(Z(t))+\nabla^3H_0(Z(t))[\partial_ru^{(0)}]\big)=0.
\]

Differentiation of \eqref{eq:definition_of_fundamental_solution} at $r=0$ shows that $\partial_rX_0(t)$ satisfies 
\[
\big(\partial_rX_0\big)\dot{\phantom{|}}(t)=A_0(t)\partial_rX_0(t)+\partial_rA_0(t)X_0(t),\quad \partial_rX_0(0)=0,
\]
but since $\partial_rA_0(t)=0$, $\partial_rX_0(t)=0$ follows. In a similar way we see that 
\[
\big(\partial^2_rX_0\big)\dot{\phantom{|}}(t)=A_0(t)\partial^2_rX_0(t)+\partial^2_rA_0(t)X_0(t),\quad \partial^2_rX_0(0)=0,
\]
such that variation of constants for $\partial_r^2X_0(t)$ leads to the stated general expansion. 

If we now in particular consider $X_r(t)\hat{a}$ with $\hat{a}\in D$, we already know by Lemma \ref{lem:floquet_solutions_of_whole_plane_case} that $X_0(t)\hat{a}=\hat{a}$. We also conclude
\[
\partial_r^2X_0(t)\hat{a}=X_0(t)\int_0^tX_0(s)^{-1}\partial_r^2A_0(s)\hat{a}\:ds
\]
and by \eqref{eq:lem:properties_of_whole_plane:orthogonality_of_H_0}, \eqref{eq:lem:basic_properties_nabla_F}, \eqref{eq:lem:basic_properties_derivatives_of_H_r_0} there holds
\begin{align*}
\partial_r^2A_0(s)\hat{a}&=M_{\vec{\Gamma}}^{-1}J_N\big(\partial_r^2\nabla^2 H_0(Z(s))+\nabla^3H_0(Z(s))[\partial_r^2u^{(0)}(s)]\big)\hat{a}\\
&=-M_{\vec{\Gamma}}^{-1}J_N2\nabla^2F(0)\hat{a}=-2\frac{d}{d\varepsilon}_{|\varepsilon=0}M_{\vec{\Gamma}}^{-1}J_N\nabla F(\varepsilon\hat{a})\\
&=-2\Gamma \frac{d}{d\varepsilon}_{|\varepsilon=0} \big(J\nabla h(\varepsilon a)\big)\widehat{\phantom{|}}=-2\Gamma \big(J\nabla^2 h(0)a\big)\widehat{\phantom{|}}\in D. 
\end{align*}
It follows $\partial_r^2X_0(t)\hat{a}=-2t\Gamma \big(J\nabla^2 h(0)a\big)\widehat{\phantom{|}}$ and therefore the desired expansion.
\end{proof}
As a last step in this section we consider the extension of the monodromy operator $X_r(2\pi)$ to a complex linear map $X_r(2\pi):\C^{2N}\rightarrow \C^{2N}$ and define the notation $\hat{a}$ also for complex vectors $\hat{a}=(a,\ldots,a)\in\C^{2N}$, $a\in \C^2$.
\begin{corollary}\label{cor:almost_eigenvalues_for_X_r} Let $m\geq 4$.
There exist $a_{\pm}\in\C^2\setminus\{0\}$, such that
\[
X_r(2\pi)\hat{a}_{\pm}=\left(1\pm 2\pi\Gamma r^2\sqrt{-\det\nabla^2h(0)}\right)\hat{a}_{\pm}+o(r^2).
\]
\end{corollary}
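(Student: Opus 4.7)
The plan is to combine the second assertion of Lemma \ref{lem:expansion_of_monodromy_operator_on_D}, extended by complex linearity to vectors $\hat{a}$ with $a\in\C^2$, with a diagonalization of the $2\times 2$ matrix $B:=J\nabla^2 h(0)$. For every $a\in\C^2$ the lemma gives
\[
X_r(2\pi)\hat{a} = \hat{a} - 2\pi r^2 \Gamma \bigl(Ba\bigr)\widehat{\phantom{|}} + o(r^2),
\]
so whenever $a$ is an eigenvector of $B$ with eigenvalue $\mu$ one has $(Ba)\widehat{\phantom{|}} = \mu\hat{a}$, and therefore $X_r(2\pi)\hat{a} = \bigl(1 - 2\pi r^2 \Gamma \mu\bigr)\hat{a} + o(r^2)$. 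Finding the correct eigenvalues of $B$ and the corresponding eigenvectors is all that remains.

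Next I would compute the spectrum of $B$. Since $\nabla^2 h(0)$ is symmetric and $J$ has $\tr J=0$, $\det J=1$, a direct $2\times 2$ calculation gives $\tr B = 0$ and $\det B = \det(J)\det\nabla^2 h(0) = \det\nabla^2 h(0)$. Hence the characteristic polynomial of $B$ is $\lambda^2 + \det\nabla^2 h(0)$, and the eigenvalues of $B$ in $\C$ are precisely $\pm\sqrt{-\det\nabla^2 h(0)}$. The hypothesis that $a_0=0$ is a nondegenerate critical point of $h$ ensures $\det\nabla^2 h(0)\neq 0$, so the two eigenvalues are distinct and each eigenspace is one-dimensional. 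Choosing $a_+\in\C^2\setminus\{0\}$ in the eigenspace for $-\sqrt{-\det\nabla^2 h(0)}$ and $a_-\in\C^2\setminus\{0\}$ in the eigenspace for $+\sqrt{-\det\nabla^2 h(0)}$, the displayed formula above yields exactly the claim.

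I do not foresee a serious obstacle: the corollary is an almost immediate consequence of Lemma \ref{lem:expansion_of_monodromy_operator_on_D} once the elementary spectral calculation for $B$ has been carried out. The only mild subtlety worth noting is that when $h$ has a local minimum or maximum at $a_0$ the determinant $\det\nabla^2 h(0)$ is positive, so $\pm\sqrt{-\det\nabla^2 h(0)}$ are purely imaginary complex conjugates, and accordingly the eigenvectors $a_\pm$ are genuinely complex (in fact complex conjugates of each other). This is precisely why the statement is phrased on the complexification of $\R^{2N}$ rather than on $\R^{2N}$ itself, and it also explains why in the saddle case $\det\nabla^2 h(0)<0$ the perturbed eigenvalues of $X_r(2\pi)$ move off the unit circle along the real axis, matching the instability part of Theorem \ref{thm:linear_stability}.
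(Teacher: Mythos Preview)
Your proposal is correct and follows essentially the same route as the paper: both reduce the claim via Lemma~\ref{lem:expansion_of_monodromy_operator_on_D} to computing the spectrum of $J\nabla^2 h(0)$, and both obtain the characteristic polynomial $\mu^2+\det\nabla^2 h(0)$. The only cosmetic difference is that the paper derives this polynomial by first orthogonally diagonalizing $\nabla^2 h(0)$, whereas you use the trace--determinant formula directly; your argument is slightly more streamlined but otherwise identical in substance.
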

\begin{proof}
Let $\lambda_1,\lambda_2\in\R\setminus\{0\}$ denote the eigenvalues of $\nabla^2h(0)$ and $S\in\R^{2\times 2}$ be an orthogonal matrix with 
\[
S^T\nabla^2h(0)S=\begin{pmatrix}
\lambda_1 & 0\\
0 & \lambda_2
\end{pmatrix}.
\]
Without restriction we can assume $\det S=1$, which in dimension $2$ means that $S$ is a symplectic matrix, i.e. $S^TJS=J$. By Lemma \ref{lem:expansion_of_monodromy_operator_on_D} we need to investigate the spectrum of $J\nabla^2h(0)$. The characteristic polynomial reads
\begin{align*}
\det\left(J\nabla^2h(0)-\mu\id_{\C^2}\right)&=\det\left(SS^TJSS^T\nabla^2h(0)SS^T-\mu SS^T\right)\\
&=\det\left(J\begin{pmatrix}
\lambda_1&0\\
0&\lambda_2
\end{pmatrix}-\mu\id_{\C^2}\right)\\
&=\mu^2+\lambda_1\lambda_2.
\end{align*}
Thus the eigenvalues of $J\nabla^2h(0)$ are given by $\mu_{1,2}=\pm\sqrt{-\det\nabla^2h(0)}\neq 0$. By a corresponding choice of possibly complex eigenvectors $a_{\pm}$ we can conclude the statement.
\end{proof}
This Corollary gives us a hint how the type of the critical point of $h$ influences the spectral stability of the periodic solutions. If for example $0$ is a saddlepoint, then $X_r(2\pi)$ in second order stretches a certain vector $\hat{a}\in D$ by a factor $1+cr^2$ for some $c>0$. However, the existence of such a vector alone does not imply the existence of an actual eigenvalue $\lambda(r)=1+cr^2+o(r^2)$ of $X_r(2\pi)$, cf. Example \ref{ex:counterexample_to_actual_eigenvalues} below. The next section provides a condition, under which such a  conclusion is possible.

\section{Approximately simple eigenvalues}\label{sec:approximately_simple_eigenvalues}

Now we will consider a general eigenvalue and eigenvector continuation problem. Let $V$ be a finite-dimensional vector space over $\C$ equipped with some norm. For a linear map $A\in \cL(V)$ and an eigenvalue $\lambda\in \sigma(A)$ we denote the corresponding eigenspace by $E_\lambda(A)=\ker(A-\lambda\id_V)$ and by $\nu_\lambda(A)$ the algebraic multiplicity of $\lambda$. If $\nu_\lambda(A)=1$, then $\lambda$ is called a simple eigenvalue of $A$.

Consider now a continuous family of linear maps $M:[0,r_0)\rightarrow \cL(V)$, $r\mapsto M_r$.
Suppose that $\lambda_0\in\sigma(M_0)$ is an eigenvalue of $M_0$ with corresponding eigenvector $v_0\in E_{\lambda_0}(M_0)\setminus\{0\}$. It is well known that if $\lambda_0$ is a simple eigenvalue, then there exists $r_1<r_0$ and continuous functions $\lambda:[0,r_1)\rightarrow\C$, $v:[0,r_1)\rightarrow V$ with $\lambda(0)=\lambda_0$, $v(0)=v_0$ and such that $M_rv(r)=\lambda(r)v(r)$ for any $r\in[0,r_1)$. For the convenience we briefly sketch the proof. Let $F:[0,r_0)\times V\times\C\rightarrow V\times\C$,
\[
F(r,v,\lambda)=\big(M_rv-\lambda v, \phi(v)-1\big),
\] 
where $\phi:V\rightarrow \C$ is an arbitrary linear functional with $\phi(v_0)=1$. Then $F(0,v_0,\lambda_0)=0$ and the derivative
\[
D_{(v,\lambda)}F(0,v_0,\lambda_0)[v,\lambda]=\big(M_0v-\lambda_0v-\lambda v_0,\phi(v)\big)
\]
is an isomorphism. Indeed $D_{(v,\lambda)}F(0,v_0,\lambda_0)[v,\lambda]=0$ implies $(M_0-\lambda_0\id_V)^2v=0$ and thus  $v\in\C v_0$ by the simplicity of the eigenvalue. But then $\phi(v)=0$ yields $v=0$ and finally $\lambda=0$. Hence the statement follows by the implicit function theorem. Of course if we would have required $r\mapsto M_r$ to be of class $\cC^k$, then also $r\mapsto \lambda(r)$ and $r\mapsto v(r)$ are $\cC^k$ functions. This settles the case of simple eigenvalues.

In the case that $\lambda_0$ is not a simple eigenvalue, a continuous choice of eigenvalue functions emanating from $\lambda_0$ is still possible. This follows by the continuous dependence of the roots of polynomials with respect to the coefficients. But a continuation of the corresponding eigenvectors is in general, even in the diagonalizable case when $\dim E_{\lambda_0}(M_0)=\nu_{\lambda_0}(M_0)$, not possible. A (symplectic) example is the family
\[
M_0=\begin{pmatrix}
1&0\\
0&1
\end{pmatrix},\quad
M_r=R\left(1/r\right) \begin{pmatrix}
1+r & 0\\
0 & \frac{1}{1+r}
\end{pmatrix} R\left(1/r\right)^{-1}, r>0,
\]
with
\[
R(\theta)=\begin{pmatrix}
\cos \theta & -\sin\theta\\
\sin\theta & \cos\theta
\end{pmatrix}.
\]

In the remaining part of this section we will introduce a condition that allows us to construct continuous eigenvector functions emanating from the eigenspace of a multiple eigenvalue. The condition is based on a higher order approximation.

\begin{definition}\label{def:approx_simple_eigenvalue}
The eigenvalue $\lambda_0\in\sigma(M_0)$ is called approximately simple with respect to the family $(M_r)_{r\in[0,r_0)}$ if the eigenspace $V_0:=E_{\lambda_0}(M_0)$ has dimension $\nu:=\nu_{\lambda_0}(M_0)$
and if there exists a linear map $B_0\in \cL(V_0)$ having $\nu$ distinct eigenvalues, as well as a continuous function $f:[0,r_0)\rightarrow\C$ with $f(0)=0$, $f(r)\neq 0$ for $r\in(0,r_0)$, such that the restriction $M_{r|V_0}:V_0\rightarrow V$ can be written as
\begin{equation}\label{eq:approx_simple_eigenvalue_condition}
M_{r|V_0}=\lambda_0\id_{V_0}+f(r)B_0+o(f(r)),
\end{equation}
when $r\rightarrow 0$.
\end{definition}
Note that if $f(r)=r^n$, condition \eqref{eq:approx_simple_eigenvalue_condition} is a kind of partial differentiability.

The following lemma shows that the approximate eigenvalues $\lambda_0+f(r)\mu_0$, $\mu_0\in\sigma(B_0)$ give rise to actual eigenvalues.  
\begin{lemma}\label{lem:actual_eigenvalues}
Let $\lambda_0\in\sigma(M_0)$ be an approximately simple eigenvalue of $M_0$ with associated maps $B_0$ and $f$ as in Definition \ref{def:approx_simple_eigenvalue}. Then for every $\mu_0\in\sigma(B_0)$ and $v_0\in E_{\mu_0}(B_0)\setminus\{0\}$ there exist $r_1\in(0,r_0)$ and continuous maps $\lambda:[0,r_1)\rightarrow \C$, $v:[0,r_1)\rightarrow V$, $v_1:[0,r_1)\rightarrow E_{\lambda_0}(M_0)$ with 
\begin{gather*}
M_rv(r)=\lambda(r)v(r),\quad \lambda(r)=\lambda_0+f(r)\mu_0+o(f(r))\\
v(r)=v_1(r)+o(f(r)),\quad v(0)=v_1(0)=v_0.
\end{gather*}
\end{lemma}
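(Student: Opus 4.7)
The plan is a Lyapunov--Schmidt reduction combined with a rescaled implicit function theorem. Since $\dim V_0 = \nu_{\lambda_0}(M_0)$, the algebraic and geometric multiplicities of $\lambda_0$ for $M_0$ coincide, so the generalized eigenspace of $M_0$ at $\lambda_0$ equals $V_0$ and there is an $M_0$-invariant complement $W_0$ (the direct sum of the generalized eigenspaces of $M_0$ for the remaining eigenvalues). Let $P_0\in\cL(V)$ be the projection onto $V_0$ along $W_0$ and $Q_0=\id_V-P_0$ the projection onto $W_0$. Decomposing $v=v_1+w$ with $v_1\in V_0$, $w\in W_0$, the eigenvalue equation $M_rv=\lambda v$ splits as
\begin{align*}
P_0 M_r v_1 + P_0 M_r w &= \lambda v_1,\\
(Q_0 M_r Q_0 - \lambda\id_{W_0})w &= -Q_0 M_r v_1.
\end{align*}

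At $(r,\lambda)=(0,\lambda_0)$, the operator on the left of the second equation equals $M_0|_{W_0}-\lambda_0\id_{W_0}$, which is invertible because $\lambda_0\notin\sigma(M_0|_{W_0})$. Hence for $(r,\lambda)$ in a neighborhood of $(0,\lambda_0)$ the second equation can be solved for $w$ as a continuous function of $v_1$, $r$, $\lambda$, linear in $v_1$. Since $B_0$ maps $V_0$ into $V_0$ and $Q_0|_{V_0}=0$, the expansion \eqref{eq:approx_simple_eigenvalue_condition} gives $Q_0 M_r v_1 = o(f(r))\norm{v_1}$, so that $w = o(f(r))\norm{v_1}$ uniformly for $\lambda$ in a small neighborhood of $\lambda_0$.

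Substituting this $w$ into the first equation, rescaling via $\lambda=\lambda_0+f(r)\tilde\mu$ and dividing by $f(r)$ leads to a reduced equation on $V_0$,
\[
(B_0-\tilde\mu\id_{V_0})v_1+R_r(v_1,\tilde\mu)=0,
\]
where $R_r\to 0$ as $r\to 0$ uniformly on bounded subsets of $V_0\times\C$. Pick a linear functional $\phi\colon V_0\to\C$ with $\phi(v_0)=1$ and define
\[
F(r,v_1,\tilde\mu)=\bigl((B_0-\tilde\mu\id_{V_0})v_1+R_r(v_1,\tilde\mu),\,\phi(v_1)-1\bigr),
\]
continuously extended to $r=0$ by $R_0\equiv 0$. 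Then $F(0,v_0,\mu_0)=0$, and the partial derivative $D_{(v_1,\tilde\mu)}F(0,v_0,\mu_0)[\delta v,\delta\mu]=\bigl((B_0-\mu_0\id_{V_0})\delta v-\delta\mu\, v_0,\phi(\delta v)\bigr)$ is an isomorphism: because $B_0$ has $\nu$ distinct eigenvalues on the $\nu$-dimensional space $V_0$, it is diagonalizable, $\mu_0$ is simple, and $v_0\notin\range(B_0-\mu_0\id_{V_0})$; this forces $\delta\mu=0$, whence $\delta v\in\C v_0$, and $\phi(\delta v)=0$ then gives $\delta v=0$. A continuous implicit function theorem now produces $v_1(r)$, $\tilde\mu(r)$, continuous on some $[0,r_1)$, with $v_1(0)=v_0$, $\tilde\mu(0)=\mu_0$. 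Setting $\lambda(r)=\lambda_0+f(r)\tilde\mu(r)$ and $v(r)=v_1(r)+w(r)$ furnishes the desired continuations; in particular, $\lambda(r)=\lambda_0+f(r)\mu_0+o(f(r))$ and $v(r)=v_1(r)+o(f(r))$. The principal point is that the $o(f(r))$ remainder in \eqref{eq:approx_simple_eigenvalue_condition} must be uniform in $v_1$ (and, after inversion of the Schur complement, in $\lambda$) so that it survives division by $f(r)$; this uniformity is exactly what Definition \ref{def:approx_simple_eigenvalue} encodes.
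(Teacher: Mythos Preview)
Your argument is correct and, at its core, uses the same implicit function theorem device with a normalizing functional $\phi$ as the paper does. The organization, however, is genuinely different. You perform a Lyapunov--Schmidt reduction: first invert $(Q_0M_rQ_0-\lambda\id_{W_0})$ to solve the $W_0$-equation for $w$ in terms of $(r,\lambda,v_1)$, deduce $w=o(f(r))$ from the expansion \eqref{eq:approx_simple_eigenvalue_condition}, then substitute back and apply the implicit function theorem to the reduced $\nu$-dimensional problem on $V_0$. The paper instead bypasses the reduction via the scaled ansatz $\tilde v=v+f(r)w$, $\lambda=\lambda_0+f(r)\mu$ with $v\in V_0$, $w\in W$; after dividing the full eigenvalue equation by $f(r)$, one obtains a single map $F:[0,r_0)\times V_0\times W\times\C\to V\times\C$ to which the implicit function theorem applies directly at $(0,v_0,0,\mu_0)$. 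The paper's pre-scaling of the $W$-component by $f(r)$ packages the estimate $w=o(f(r))$ into the ansatz itself and avoids having to track that the derivatives $D_{(v_1,\tilde\mu)}R_r$ converge to zero (which you need for your reduced implicit function theorem step, and which does hold because the resolvent and $Q_0M_r|_{V_0}$ are linear in $v_1$ with operator norm $o(f(r))$). Your route makes the structure of the bifurcation more transparent by isolating the Schur complement; the paper's route is shorter and treats $v$, $w$, $\mu$ on an equal footing in one application of the implicit function theorem.
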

Lemma \ref{lem:actual_eigenvalues} will be applied in our stability analysis. Note that the existence of a single vector $e\in V$ satisfying $M_re=(\lambda_0+f(r))e+o(f(r))$ is not sufficient to conclude that an actual eigenvalue $\lambda(r)=\lambda_0+f(r)+o(f(r))$ exists. We provide the following symplectic counterexample.
\begin{example}\label{ex:counterexample_to_actual_eigenvalues}
For $r\geq 0$ there holds
\[
M_re_1:=\begin{pmatrix}
\cos(r^2)+\sin(r^2) & -2r\\
\frac{\sin^2(r^2)}{r}&\cos(r^2)-\sin(r^2)
\end{pmatrix}\begin{pmatrix}
1\\0
\end{pmatrix}=(1+r^2)e_1+o(r^2),
\]
but the computation of the eigenvalues via the characteristic polynomial 
\[
\lambda^2-2\cos(r^2)\lambda+1=0
\] 
shows $\sigma(M_r)=\set{e^{ir^2},e^{-ir^2}}$ and $e^{\pm i r^2}=1\pm ir^2+o(r^2)$.
\end{example}
\begin{proof}[Proof of Lemma \ref{lem:actual_eigenvalues}] As in the case of a simple eigenvalue, the proof is based on the implicit function theorem.
For $\tilde{v}\in V\setminus\{0\}$, $\lambda\in \C$, $r>0$ we want to solve the equation
\begin{equation}\label{eq:eigenvalue_equation}
M_r\tilde{v}-\lambda \tilde{v}=0.
\end{equation}
In order to do this we fix a complement $W$ of $V_0=E_{\lambda_0}(M_0)$, i.e., $V=V_0\oplus W$ and write $\lambda=\lambda_0+f(r)\mu$, $\mu\in\C$, as well as
$\tilde{v}=v+f(r)w$, $v\in V_0$, $w\in W$. Plugging this ansatz into \eqref{eq:eigenvalue_equation} and dividing by $f(r)$ we obtain the equivalent equation
\begin{align}\label{eq:equivalent_formulation_after_ansatz}
F_1(r,v,w,\mu):=\frac{M_rv-\lambda_0v}{f(r)}-\mu v+M_rw-\lambda_0w-f(r)\mu w=0.
\end{align}
By \eqref{eq:approx_simple_eigenvalue_condition} the map $F_1:(0,r_0)\times V_0\times W\times \C\rightarrow V$ continuously extends to
\[
F_1(0,v,w,\mu)=B_0v-\mu v+M_0w-\lambda_0w
\]
as $r\rightarrow 0$. The same holds true for the partial derivative $D_{(v,w,\mu)}F_1$ with respect to $v,w$ and $\mu$.

Let us now fix $\mu_0\in \sigma(B_0)$, a corresponding eigenvector $v_0\in E_{\mu_0}(B_0)\subset V_0$ and an arbitrary linear functional $\phi:V_0\rightarrow\C$ satisfying $\phi(v_0)=1$. Consider the map $F:[0,r_0)\times V_0\times W\times\C\rightarrow V\times\C$,
\begin{align*}
F(r,v,w,\mu)=\big(F_1(r,v,w,\mu),\phi(v)-1\big).
\end{align*}

Then we have $F(0,v_0,0,\mu_0)=0$. Furthermore, at $(0,v_0,0,\mu_0)$ the derivative with respect to $(v,w,\mu)$ is given by
\[
D_{(v,w,\mu)}F(0,v_0,0,\mu_0)[v,w,\mu]=\big(B_0v-\mu_0 v-\mu v_0+M_0w-\lambda_0w,\phi(v)\big).
\]
Thus for $(v,w,\mu)\in\ker D_{(v,w,\mu)}F(0,v_0,0,\mu_0)$ there holds $\phi(v)=0$ and
\begin{equation}\label{eq:kernel_equation}
B_0v-\mu_0v-\mu v_0=-(M_0w-\lambda_0w).
\end{equation} 
Since the left-hand side of \eqref{eq:kernel_equation} is contained in $V_0$, we get $(M_0-\lambda_0\id_V)^2w=0$, which shows $w\in V_0$, since the algebraic multiplicity $\nu_{\lambda_0}(M_0)$ coincides by Definition \ref{def:approx_simple_eigenvalue} with the geometric multiplicity $\dim E_{\lambda_0}(M_0)$. But then $w\in V_0\cap W=\{0\}$. Therefore \eqref{eq:kernel_equation} reduces to $B_0v-\mu_0v=\mu v_0$, which implies $(B_0-\mu_0\id_{V_0})^2v=0$. As before, since $\mu_0$ is a simple eigenvalue of $B_0$, $v\in\C v_0$. By $\phi(v)=0$ and $\phi(v_0)=1$, $v=0$ follows. Finally \eqref{eq:kernel_equation} shows $\mu=0$.

Therefore $D_{(v,w,\mu)}F(0,v_0,0,\mu_0)$ is an isomorphism and the implicit function theorem provides solutions $v(r)\in V_0$, $w(r)\in W$, $\mu(r)\in \C$ of $F_1(r,\cdot,\cdot,\cdot)=0$. By our ansatz $\lambda(r)=\lambda_0+f(r)\mu(r)$ and $\tilde{v}(r)=v(r)+f(r)w(r)$ are the desired eigenvalue and eigenvector functions.
\end{proof}

Lemma \ref{lem:actual_eigenvalues} or rather the bifurcation of a multiple eigenvalue in general is related to Theorem 7 of Lancaster's paper \cite{lancaster_eigenvalues_1964}, but note that contrary to \cite{lancaster_eigenvalues_1964} the family of matrices here does not need to depend analytically on the parameter.
\section{Application to a Poincar\'e section}\label{sec:application_to_a_poincare_section}
Suppose from now on that $g$ is of class $\cC^m$ with $m\geq 4$. By Lemma \ref{lem:expansion_of_monodromy_operator_on_D} and Corollary \ref{cor:almost_eigenvalues_for_X_r} the restriction of the monodromy operator $X_r(2\pi)_{|D}:D\rightarrow \R^{2N}$ has the structure
\[
X_r(2\pi)_{|D}=\id_D-2\pi\Gamma r^2 B_0+o(r^2),
\]
with $B_0:D\rightarrow D$, $B_0\hat{a}=\big(J\nabla^2h(0)a\big)\widehat{\phantom{|}}$ and $\sigma(B_0)=\set{\pm\sqrt{-\det\nabla^2h(0)}}$. Nonetheless Lemma \ref{lem:actual_eigenvalues} still does not apply due to the fact that the eigenspace $E_1(X_0(2\pi))$ is bigger than $D$. In fact by the definition of a nondegenerate relative equilibrium the geometric multiplicity is $3$ and the algebraic is at least $4$. 
We suppose from now on that the algebraic multiplicity of $1$ is exactly $4$, i.e., that $Z$ is algebraic nondegenerate. This way the generalized eigenspace of $X_0(2\pi)$ is precisely given by $D\oplus \R J_Nz_0\oplus\R z_0$. 
A suitable Poincar\'e section will reduce the generalized eigenspace to the space $D$ only, such that Lemma \ref{lem:actual_eigenvalues} can be applied.

\subsection{A linear section}
Let $(r,t,u)\mapsto \phi_r(t,u)$ denote the flow of \eqref{eq:n_vortex_with_parameter}, which is defined on an open subset of $[0,r_0)\times\R\times\R^{2N}$ and of class $\cC^{m-1}$. Then the relative equilibrium solution can be written as $Z(t)=\phi_0(t,z_0)$, more generally $u^{(r)}(t)=\phi_r\big(t,u^{(r)}(0)\big)$ and the monodromy matrix can be expressed by $X_r(2\pi)=D_u\phi_r\big(2\pi,u^{(r)}(0)\big)$. Recall also that \eqref{eq:n_vortex_with_parameter} is Hamiltonian with respect to the non-standard symplectic form $\omega_{\vec{\Gamma}}(v,w)=\ska{M_{\vec{\Gamma}} v,J_N w}$. Thus $X_r(2\pi)$ is a symplectic linear mapping, i.e., $\omega_{\vec{\Gamma}}(X_r(2\pi)\cdot,X_r(2\pi)\cdot)=\omega_{\vec{\Gamma}}$.  
We consider the linear subspace
\[
\Sigma=\set{z\in\R^{2N}:\omega_{\vec{\Gamma}}(z_0,z)=0}
\]
and apply the implicit function theorem to the equation $\tilde{\omega}(r,t,u)=0$ where $\tilde{\omega}(r,t,u)=\omega_{\vec{\Gamma}}(z_0,\phi_r(t,u))$. There holds $\tilde{\omega}(0,2\pi,z_0)=0$ and
\begin{align*}
\partial_t\tilde{\omega}(0,2\pi,z_0)&=\frac{d}{dt}_{|t=2\pi}\omega_{\vec{\Gamma}}(z_0,\phi_0(t,z_0))=\omega_{\vec{\Gamma}}(z_0,\dot{Z}(2\pi))\\&=\omega_{\vec{\Gamma}}(z_0,J_NM_{\vec{\Gamma}}^{-1}\nabla H_0(z_0))=\frac{L}{\pi}\neq 0
\end{align*}
by \eqref{eq:lem:properties_of_whole_plane:nabla_H_times_z} and Lemma \ref{lem:algebraic_nondegenerate_implies_L_not_0}. Thus the implicit function theorem provides a hitting time $\tau\in\cC^{m-1}\big([0,r_1)\times B_{\varepsilon_0}(z_0)\big)$ for some $r_1\in(0,r_0)$ and $\varepsilon_0>0$ satisfying $\phi_r(\tau(r,u),u)\in\Sigma$ for all $(r,u)\in[0,r_1)\times B_{\varepsilon_0}(z_0)$. In particular $\tau(0,z_0)=2\pi$. Since $u^{(r)}(0)\rightarrow z_0$ as $r\rightarrow 0$, $\tau_r:=\tau\big(r,u^{(r)}(0)\big)$ is well-defined on a subinterval $[0,r_2)\subset[0,r_1)$ and 
\[
u^{(r)}\left(\tau_r\right)=\phi_r\left(\tau_r,u^{(r)}(0)\right)\in\Sigma
\]
for all $r\in[0,r_2)$.
\begin{lemma}\label{lem:derivative_of_tau_r_0}
The map $r\mapsto\tau_r$ is contained in $\cC^{m-2}([0,r_2))\cap\cC^{m-1}((0,r_2))$ with $\tau_0=2\pi$ and $\partial_r\tau_0=0$.
\end{lemma}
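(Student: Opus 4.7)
The plan is to obtain the regularity of $\tau_r$ from the regularity of the implicit function $\tau(r,u)$ together with that of $r\mapsto u^{(r)}(0)$, and then to compute $\partial_r\tau_0$ via the chain rule, reducing each of the two resulting summands to $0$ using previously established identities.

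First I would note that the flow $(r,t,u)\mapsto\phi_r(t,u)$ is $\cC^{m-1}$, so $\tilde{\omega}$ is $\cC^{m-1}$ in all its arguments, and hence the implicit function $\tau(r,u)$ produced by the nonvanishing of $\partial_t\tilde{\omega}(0,2\pi,z_0)=L/\pi$ is of class $\cC^{m-1}$ on its domain. Combined with Proposition \ref{prop:existence_result_detailled}, which states $r\mapsto u^{(r)}$ is $\cC^{m-2}$ on $[0,r_0)$ and $\cC^{m-1}$ on $(0,r_0)$, the composition $\tau_r=\tau(r,u^{(r)}(0))$ inherits the claimed regularity. The value $\tau_0=\tau(0,z_0)=2\pi$ is immediate from the construction.

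To evaluate $\partial_r\tau_0$, I would write
\[
\partial_r\tau_0=\partial_r\tau(0,z_0)+D_u\tau(0,z_0)\bigl[\partial_r u^{(0)}(0)\bigr]
\]
and dispose of both terms. Implicit differentiation of $\tilde{\omega}(r,\tau(r,u),u)=0$ at $(0,z_0)$ together with $\partial_t\tilde\omega(0,2\pi,z_0)=L/\pi\neq 0$ gives
\[
\partial_r\tau(0,z_0)=-\frac{\pi}{L}\,\omega_{\vec{\Gamma}}\bigl(z_0,\partial_r\phi_0(2\pi,z_0)\bigr).
\]
The variational equation for $\partial_r\phi_r(t,z_0)$ at $r=0$ reads
\[
\frac{d}{dt}\partial_r\phi_0(t,z_0)=M_{\vec{\Gamma}}^{-1}J_N\bigl[\partial_r\nabla H_0(Z(t))+\nabla^2 H_0(Z(t))\,\partial_r\phi_0(t,z_0)\bigr],
\]
with vanishing initial condition. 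Lemma \ref{lem:basic_properties_of_X_r_A_r} yields $\partial_r\nabla H_0=0$, so $\partial_r\phi_0(t,z_0)\equiv 0$ and thus $\partial_r\tau(0,z_0)=0$.

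For the second summand, the same implicit differentiation in the $u$ variable gives $D_u\tau(0,z_0)[v]=-(\pi/L)\,\omega_{\vec{\Gamma}}(z_0,X_0(2\pi)v)$. Equation \eqref{eq:derivative_of_u_r} says $\partial_r u^{(0)}(0)=\hat{a}\in D$ for some $a\in\R^2$, and Lemma \ref{lem:floquet_solutions_of_whole_plane_case} gives $X_0(2\pi)\hat{a}=\hat{a}$, so
\[
\omega_{\vec{\Gamma}}(z_0,\hat{a})=\langle M_{\vec{\Gamma}}z_0,J_N\hat{a}\rangle=\langle M_{\vec{\Gamma}}z_0,\widehat{Ja}\rangle=0,
\]
by \eqref{eq:lem:properties_of_whole_plane:center_of_vorticity_0} (since $\widehat{Ja}\in D$ and $M_{\vec{\Gamma}}z_0\in D^\perp$). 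Both terms thus vanish, giving $\partial_r\tau_0=0$.

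I do not foresee a real obstacle here: once the chain rule is written down, the whole argument is a straight application of previously collected facts, namely $\partial_r\nabla H_0=0$, the invariance of $D$ under $X_0(t)$, $M_{\vec{\Gamma}}z_0\in D^\perp$, and the nondegeneracy $L\neq 0$ needed to invert the implicit function relation. The only subtlety is remembering that the first-order correction $\partial_r u^{(0)}(0)$ lies precisely in the subspace $D$ where $\omega_{\vec{\Gamma}}(z_0,\cdot)$ already vanishes.
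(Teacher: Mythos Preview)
Your proof is correct and follows essentially the same approach as the paper's: implicit differentiation of the defining relation, with both resulting summands killed by $\partial_r u^{(0)}\in D$, $M_{\vec\Gamma}z_0\in D^\perp$, and $L\neq 0$. The one cosmetic difference is that the paper exploits the $2\pi$-periodicity to rewrite the relation as $\omega_{\vec\Gamma}\bigl(z_0,\phi_r(\tau_r-2\pi,u^{(r)}(0))\bigr)=0$ and differentiates at $t=0$ (where $\partial_r\phi_0(0,z_0)=0$ and $D_u\phi_0(0,z_0)=\id$ are immediate), whereas you stay at $t=2\pi$ and establish $\partial_r\phi_0(2\pi,z_0)=0$ through the variational equation and $\partial_r\nabla H_0=0$ from Lemma~\ref{lem:basic_properties_of_X_r_A_r}.
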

\begin{proof} The regularity of $r\mapsto \tau_r$ follows from the regularity of $\tau$ and $r\mapsto u^{(r)}(0)$. Also by definition $\tau_0=\tau(0,z_0)=2\pi$. For the derivative $\partial_r\tau_0$ we use the $2\pi$-periodicity of $u^{(r)}$ and differentiate the defining equation 
\[
\omega_{\vec{\Gamma}}\left(z_0,\phi_r\left(\tau_r-2\pi,u^{(r)}(0)\right)\right)=0
\]
at $r=0$. This yields
\begin{align*}
0=\omega_{\vec{\Gamma}}\left(z_0,\partial_r\phi_0(0,z_0)+\partial_t\phi_0(0,z_0)\partial_r\tau_0+D_u\phi_0(0,z_0)\partial_ru^{(0)}(0)\right).
\end{align*}
Now $\phi_r(0,u)=u$ implies $\partial_r\phi_0(0,z_0)=0$ and $D_u\phi(0,z_0)=\id_{\R^{2N}}$. As before, $\partial_t\phi_0(0,z_0)=M_{\vec{\Gamma}}^{-1}J_N\nabla H_0(z_0)$. Thus \eqref{eq:lem:properties_of_whole_plane:nabla_H_times_z}, \eqref{eq:lem:properties_of_whole_plane:center_of_vorticity_0}  and \eqref{eq:derivative_of_u_r} show
\[
0=\omega_{\vec{\Gamma}}\big(z_0,M_{\vec{\Gamma}}^{-1}J_N\nabla H_0(z_0)\big)\partial_r\tau_0+\omega_{\vec{\Gamma}}\big(z_0,\partial_ru^{(0)}\big)=\frac{L}{\pi}\partial_r\tau_0,
\]
which finishes the proof of the Lemma, since $L\neq 0$ by the algebraic nondegenerateness of $Z$, cf. Lemma \ref{lem:algebraic_nondegenerate_implies_L_not_0}.
\end{proof}
By Lemma \ref{lem:admissible_time_shift} we can without restriction assume that
\[
u_r:=u^{(r)}(0)\in\Sigma,\quad \tau(r,u_r)=2\pi
\]
for all $r\in[0,r_2)$. More precisely, if we pass to $\tilde{u}^{(r)}=u^{(r)}(\cdot+\tau_r)$, then $\tilde{u}^{(r)}(t)$ and the associated monodromy operator $\tilde{X}_r(2\pi)$ has the same properties as $u^{(r)}$ and $X_r(t)$ and additionally there holds $\tilde{u}^{(r)}(0)\in\Sigma$. We therefore can directly assume that $u^{(r)}=\tilde{u}^{(r)}$.
\begin{lemma}\label{lem:properties_of_tau}
For $v\in \nabla H_0(z_0)^\perp\cap \Sigma$, $\hat{a}\in D$ as $r\rightarrow 0$ there holds
\[
D_u\tau(0,z_0)v=0,\quad D_u\tau(r,u_r)\hat{a}=o(r^2),\quad D_u\tau(r,u_r)z_0=4\pi+O(r^2).
\]
\end{lemma}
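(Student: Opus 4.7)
The plan is to obtain all three parts by implicit differentiation of the defining relation $\omega_{\vec{\Gamma}}(z_0, \phi_r(\tau(r, u), u)) = 0$. Differentiating in $u$ at $(r, u_r)$, and using $\tau(r, u_r) = 2\pi$, $\phi_r(2\pi, u_r) = u_r$, $D_u \phi_r(2\pi, u_r) = X_r(2\pi)$, and $\partial_t \phi_r(2\pi, u_r) = M_{\vec{\Gamma}}^{-1} J_N \nabla H_r(u_r)$, I arrive at the master formula
\[
D_u \tau(r, u_r) v = -\frac{\omega_{\vec{\Gamma}}(z_0, X_r(2\pi) v)}{\omega_{\vec{\Gamma}}(z_0, M_{\vec{\Gamma}}^{-1} J_N \nabla H_r(u_r))}.
\]
The commutativity of $J_N$ and $M_{\vec{\Gamma}}$ together with $J_N^2 = -\id$ collapses the denominator to $-\langle z_0, \nabla H_r(u_r)\rangle$. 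To expand it, I would use $\nabla H_r = \nabla H_0 - r\,\nabla F(r\,\cdot\,)$, the expansion $u_r = z_0 + r\,\partial_r u^{(0)}(0) + O(r^2)$ with $\partial_r u^{(0)}(0) \in D$ by \eqref{eq:derivative_of_u_r}, the fact that $\nabla^2 H_0(z_0)$ annihilates $D$ (consequence of \eqref{eq:lem:properties_of_whole_plane:orthogonality_of_H_0}), and $\nabla F(0) = 0$ (Lemma \ref{lem:basic_properties_of_X_r_A_r}), to conclude that the denominator equals $L/\pi + O(r^2)$, nonzero by Lemma \ref{lem:algebraic_nondegenerate_implies_L_not_0}. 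The three parts then reduce to analyzing the numerator in each case.

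For part (i) at $(0, z_0)$, I would invoke the symplecticity of $X_0(2\pi)$ together with $X_0(2\pi)^{-1} z_0 = z_0 - 4\pi\nu J_N z_0$ (from Lemma \ref{lem:floquet_solutions_of_whole_plane_case}) to rewrite
\[
\omega_{\vec{\Gamma}}(z_0, X_0(2\pi) v) = \omega_{\vec{\Gamma}}(z_0, v) - 4\pi\nu\, \omega_{\vec{\Gamma}}(J_N z_0, v).
\]
The first term vanishes because $v \in \Sigma$, and $\omega_{\vec{\Gamma}}(J_N z_0, v) = \langle M_{\vec{\Gamma}} z_0, v\rangle = -\nu \langle \nabla H_0(z_0), v\rangle$ by \eqref{eq:identity_M_Gamma_z0_gradient_H0}, which vanishes since $v \perp \nabla H_0(z_0)$.

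For part (ii) with $\hat{a} \in D$, Lemma \ref{lem:expansion_of_monodromy_operator_on_D} gives $X_r(2\pi) \hat{a} \in D$ modulo $o(r^2)$. Since $M_{\vec{\Gamma}} z_0 \in D^\perp$ by \eqref{eq:lem:properties_of_whole_plane:center_of_vorticity_0} and $J_N D = D$, the form $\omega_{\vec{\Gamma}}(z_0, \cdot)$ annihilates $D$, so the numerator is $o(r^2)$ and the quotient is $o(r^2)$. For part (iii), Lemma \ref{lem:expansion_of_monodromy_operator_on_D} combined with Lemma \ref{lem:floquet_solutions_of_whole_plane_case} gives $X_r(2\pi) z_0 = z_0 + 4\pi\nu J_N z_0 + O(r^2)$, and a short computation using \eqref{eq:identity_M_Gamma_z0_gradient_H0} and \eqref{eq:lem:properties_of_whole_plane:nabla_H_times_z} yields $\omega_{\vec{\Gamma}}(z_0, J_N z_0) = -\nu L/\pi$, so the numerator equals $-4L + O(r^2)$ and the quotient becomes $4\pi + O(r^2)$.

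The main subtlety is ensuring that the numerator for $\hat{a} \in D$ is genuinely $o(r^2)$ rather than merely $O(r^2)$. This requires the full second-order expansion of Lemma \ref{lem:expansion_of_monodromy_operator_on_D}, in which both the leading-order term $\hat{a}$ and the second-order correction $-2\pi r^2 \Gamma (J\nabla^2 h(0) a)\widehat{\phantom{|}}$ lie in $D$ and are therefore annihilated by $\omega_{\vec{\Gamma}}(z_0, \cdot)$. Without this cancellation $D_u \tau(r, u_r)\hat{a}$ would only be $O(r^2)$, which would be too weak for the subsequent application of Lemma \ref{lem:actual_eigenvalues} in the Poincar\'e-section analysis.
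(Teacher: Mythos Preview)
Your proof is correct and follows essentially the same approach as the paper: implicit differentiation of $\omega_{\vec{\Gamma}}(z_0,\phi_r(\tau(r,u),u))=0$, followed by case-by-case analysis of the numerator using Lemma~\ref{lem:expansion_of_monodromy_operator_on_D}, Lemma~\ref{lem:floquet_solutions_of_whole_plane_case}, and the symplecticity of $X_0(2\pi)$. Your treatment of the denominator is in fact slightly sharper than the paper's, which only records $L/\pi+o(1)$ in \eqref{eq:differentiation_for_derivative_of_tau}; your $L/\pi+O(r^2)$ (obtained via $\partial_r\nabla H_0=0$, $\nabla^2H_0(z_0)\hat a=0$, and $\nabla F(0)=0$) is what is actually needed to get the claimed $4\pi+O(r^2)$ rather than merely $4\pi+o(1)$.
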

\begin{proof}
Differentiation of $\omega_{\vec{\Gamma}}\big(z_0,\phi_r(\tau(r,u),u)\big)=0$ at $u=u_r$ in direction $v\in\R^{2N}$ shows
\begin{equation}\label{eq:differentiation_for_derivative_of_tau}
0=\omega_{\vec{\Gamma}}(z_0,X_r(2\pi)v)+\left(\frac{L}{\pi}+o(1)\right)D_u\tau(r,u_r)v
\end{equation}
as $r\rightarrow 0$.
In particular for $v=\hat{a}\in D$ we conclude by \eqref{eq:lem:properties_of_whole_plane:center_of_vorticity_0} and Lemma \ref{lem:expansion_of_monodromy_operator_on_D} that $D_u\tau(r,u_r)\hat{a}=o(r^2)$. By the general expansion in Lemma \ref{lem:expansion_of_monodromy_operator_on_D} and by Lemma \ref{lem:floquet_solutions_of_whole_plane_case} we see 
\begin{align*}
\omega_{\vec{\Gamma}}(z_0,X_r(2\pi)z_0)&=\omega_{\vec{\Gamma}}(z_0,z_0+4\pi \nu J_Nz_0+O(r^2))=-4\pi\nu\ska{M_{\vec{\Gamma}} z_0,z_0}+O(r^2)\\
&=-4\pi\nu\ska{-\nu\nabla H_0(z_0),z_0}+O(r^2)=-4L+O(r^2).
\end{align*}
In the second to last step we simply used that $Z(t)$ is a solution of the whole-plane system \eqref{eq:n_vortex_whole_plane}. The expansion $D_u\tau(r,u_r)z_0=4\pi+O(r^2)$ follows.

It remains to look at \eqref{eq:differentiation_for_derivative_of_tau} for $r=0$ and $v\in \nabla H_0(z_0)^\perp\cap \Sigma$. We use that $X_0(2\pi)$ belongs to the group of symplectic matrices with respect to $\omega_{\vec{\Gamma}}$ and conclude
\begin{align*}
-\frac{L}{\pi}D_u\tau(0,z_0)v&=\omega_{\vec{\Gamma}}(X_0(2\pi)^{-1}z_0,v)=\omega_{\vec{\Gamma}}(z_0-4\pi \nu J_N z_0,v)\\
&=-4\pi\nu\ska{M_{\vec{\Gamma}} z_0,v}=4\pi\ska{\nabla H_0(z_0),v}=0.
\end{align*}
\end{proof}

\subsection{Restriction to energy levels}
Next we restrict ourselves to corresponding energy levels. Let
\[
\Sigma_r=\Sigma\cap H_r^{-1}(H_r(u_r)),\quad\Sigma^\varepsilon_r=\Sigma_r\cap B_\varepsilon(z_0),\quad r\in[0,r_2),~\varepsilon>0.
\]
Since the flow $\phi_r$ preserves energy levels, we can define the Poincar\'e return map $P_r:\Sigma^{\varepsilon_0}_r\rightarrow\Sigma_r$,
\[
P_r(u)=\phi_r(\tau(r,u),u)
\]
for any $r\in[0,r_2)$. 
\begin{lemma}\label{lem:poincare_map_symplectic}
There exists $r_3\in(0,r_2)$ and $0<\varepsilon_2<\varepsilon_1<\varepsilon_0$, such that $\big(\Sigma^{\varepsilon_1}_r,\omega_{\vec{\Gamma}}\big)$ is a symplectic $\cC^m$-submanifold, $P_r:\Sigma^{\varepsilon_2}_r\rightarrow\Sigma^{\varepsilon_1}_r$ is a symplectic $\cC^{m-1}$-map, $u_r\in \Sigma^{\varepsilon_2}_r$ and $P_r(u_r)=u_r$.
\end{lemma}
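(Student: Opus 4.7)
The plan is to carry out a standard symplectic Poincar\'e-section construction in four short checks: (i) $\Sigma_r$ is a $\cC^m$ submanifold of codimension $2$ near $u_r$; (ii) $\omega_{\vec{\Gamma}}$ restricts to a symplectic form on $\Sigma_r^{\varepsilon_1}$; (iii) $P_r$ is $\cC^{m-1}$, maps $\Sigma_r^{\varepsilon_2}$ into $\Sigma_r^{\varepsilon_1}$ and fixes $u_r$; and (iv) $P_r$ is symplectic. For (i), $\Sigma_r$ is cut out in $\R^{2N}$ near $u_r$ by the two $\cC^m$ functions $v\mapsto\omega_{\vec{\Gamma}}(z_0,v)$ and $v\mapsto H_r(v)-H_r(u_r)$, whose Euclidean gradients at $v=u_r$ are $-J_NM_{\vec{\Gamma}}z_0$ and $\nabla H_r(u_r)$. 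Using \eqref{eq:identity_M_Gamma_z0_gradient_H0} their linear independence at $r=0$ reduces to $M_{\vec{\Gamma}}z_0\notin\R\cdot J_NM_{\vec{\Gamma}}z_0$; since $I-\lambda J_N$ is invertible on $\R^{2N}$ for every $\lambda\in\R$ and $M_{\vec{\Gamma}}z_0\neq 0$, this holds. Continuity then provides a radius $\varepsilon_1$ and a parameter $r_3$ on which (i) is valid.

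For (ii), a direct computation using $J_N^T=-J_N$ identifies the $\omega_{\vec{\Gamma}}$-orthogonal complement of $T_u\Sigma_r$ in $\R^{2N}$ as the plane $\Pi_u=\mathrm{span}\bigl(z_0,X_{H_r}(u)\bigr)$ with $X_{H_r}(u)=M_{\vec{\Gamma}}^{-1}J_N\nabla H_r(u)$. Hence nondegeneracy of $\omega_{\vec{\Gamma}}|_{T_u\Sigma_r}$ is equivalent to $\omega_{\vec{\Gamma}}(z_0,X_{H_r}(u))\neq 0$. At $(r,u)=(0,z_0)$ the commutativity $[M_{\vec{\Gamma}},J_N]=0$ together with \eqref{eq:identity_M_Gamma_z0_gradient_H0} gives $X_{H_0}(z_0)=-\nu J_Nz_0$, and \eqref{eq:lem:properties_of_whole_plane:nabla_H_times_z} yields
\[
\omega_{\vec{\Gamma}}\bigl(z_0,X_{H_0}(z_0)\bigr)=\nu\,\ska{M_{\vec{\Gamma}}z_0,z_0}=\frac{L}{\pi}\neq 0,
\]
where the last inequality is Lemma \ref{lem:algebraic_nondegenerate_implies_L_not_0}. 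A further shrinking of $\varepsilon_1$ and $r_3$ extends this to all $(r,u)\in[0,r_3)\times B_{\varepsilon_1}(z_0)$. Step (iii) is then routine: $P_r=\phi_r(\tau(r,\cdot),\cdot)$ inherits the $\cC^{m-1}$-regularity of its ingredients, $P_r(u)\in\Sigma$ by the definition of $\tau$, and $P_r(u)\in H_r^{-1}(H_r(u))$ by energy conservation; the fixed-point identity $P_r(u_r)=u_r$ comes from $\tau(r,u_r)=2\pi$ (the normalisation at the end of Section 4.1) and the $2\pi$-periodicity of $u^{(r)}$, and shrinking $\varepsilon_2<\varepsilon_1$ yields $P_r(\Sigma_r^{\varepsilon_2})\subset\Sigma_r^{\varepsilon_1}$ by continuity.

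Finally, step (iv) is the classical symplectic Poincar\'e-map calculation. Differentiating $P_r(u)=\phi_r(\tau(r,u),u)$ produces
\[
DP_r(u)v = D_u\phi_r(\tau(r,u),u)v + X_{H_r}\bigl(P_r(u)\bigr)\,D_u\tau(r,u)v.
\]
The identity $\omega_{\vec{\Gamma}}(X_{H_r},\cdot)=dH_r(\cdot)$, the conservation law $H_r\circ\phi_r(t,\cdot)\equiv H_r$, and the tangency $dH_r(u)v=0$ for $v\in T_u\Sigma_r$ jointly annihilate every cross term containing $X_{H_r}$ when one expands $\omega_{\vec{\Gamma}}(DP_r(u)v_1,DP_r(u)v_2)$, leaving $\omega_{\vec{\Gamma}}(D_u\phi_rv_1,D_u\phi_rv_2)=\omega_{\vec{\Gamma}}(v_1,v_2)$ by the symplecticity of the Hamiltonian flow. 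I expect the only non-routine point to be step (ii): recognising the $\omega_{\vec{\Gamma}}$-orthogonal complement of $T_u\Sigma_r$ and seeing that its nondegeneracy reduces precisely to $L\neq 0$, which is exactly why the algebraic nondegenerateness of $Z$ is assumed.
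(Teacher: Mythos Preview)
Your proof is correct and spells out in full the standard symplectic Poincar\'e-section construction that the paper simply delegates to \cite{moser_notes_2005}, Theorem 2.5; since the paper offers no argument of its own, there is nothing further to compare. One cosmetic slip: the normalisation $\tau(r,u_r)=2\pi$ you invoke is arranged at the end of Section~5.1, not Section~4.1.
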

The proof is the same as in the parameter independent case, which can be found for example in \cite{moser_notes_2005}, Theorem 2.5.
\begin{lemma}\label{lem:floquet_multipliers_relation_to_monodromy_matrix}
Counting the algebraic multiplicity the periodic solution $u^{(r)}$ has the Floquet multipliers $1,1,\lambda_2,\lambda_2^{-1},\ldots,\lambda_N,\lambda_N^{-1}$, if and only if the linearization of the Poincar\'e map $DP_r(u_r):T_{u_r}\Sigma_r\rightarrow T_{u_r}\Sigma_r$ has the spectrum $\lambda_2,\lambda_2^{-1},\ldots,\lambda_N,\lambda_N^{-1}$.
\end{lemma}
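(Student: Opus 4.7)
The plan is to establish the factorization
\[
\det\!\big(X_r(2\pi) - \lambda\id_{\R^{2N}}\big) = (1-\lambda)^2\det\!\big(DP_r(u_r) - \lambda\id_{T_{u_r}\Sigma_r}\big),
\]
from which the claimed equivalence of the Floquet spectrum of $u^{(r)}$ with the spectrum of $DP_r(u_r)$, counted with algebraic multiplicity, follows at once. I would obtain this identity by exhibiting a basis of $\R^{2N}$ in which $X_r(2\pi)$ is block upper triangular with diagonal blocks $DP_r(u_r)$, $1$ and $1$.

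First, differentiating $P_r(u) = \phi_r(\tau(r,u),u)$ at $u = u_r$ and using $\phi_r(2\pi,u_r) = u_r$, $\partial_t\phi_r(2\pi,u_r) = \dot u^{(r)}(0) =: e_1$, and $D_u\phi_r(2\pi,u_r) = X_r(2\pi)$ yields, for every $v \in T_{u_r}\Sigma_r$,
\[
X_r(2\pi)v = DP_r(u_r)v - \big(D_u\tau(r,u_r)v\big)\,e_1.
\]
In particular $X_r(2\pi)$ maps $T_{u_r}\Sigma_r$ into $T_{u_r}\Sigma_r \oplus \R e_1$, and the map it induces modulo $\R e_1$ is precisely $DP_r(u_r)$. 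I would then pick $f \in \R^{2N}$ transversal to the energy surface $T_{u_r}\Sigma_r \oplus \R e_1 = \set{v:\ska{\nabla H_r(u_r),v}=0}$; the choice $f = z_0$ works for $r$ small, since $\ska{\nabla H_r(u_r),z_0} \to -L/\pi \neq 0$ by \eqref{eq:lem:properties_of_whole_plane:nabla_H_times_z} and Lemma~\ref{lem:algebraic_nondegenerate_implies_L_not_0}. Using $X_r(2\pi)e_1 = e_1$ (since $u^{(r)}$ is $2\pi$-periodic) together with the formula above and the decomposition $X_r(2\pi)f = \alpha e_1 + \delta f + w$ with $w \in T_{u_r}\Sigma_r$, the matrix of $X_r(2\pi)$ in the basis $(T_{u_r}\Sigma_r;\,e_1;\,f)$ takes the block form
\[
\begin{pmatrix}
DP_r(u_r) & 0 & \ast\\
-D_u\tau(r,u_r)_{|T_{u_r}\Sigma_r} & 1 & \ast\\
0 & 0 & \delta
\end{pmatrix},
\]
and expanding the characteristic polynomial along the first column and then the middle row gives $(1-\lambda)(\delta-\lambda)\det(DP_r(u_r)-\lambda\id)$.

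To determine $\delta$ I would invoke the symplectic structure: both $X_r(2\pi)$ and $DP_r(u_r)$ are symplectic (the former as the linearization of a Hamiltonian flow with respect to $\omega_{\vec{\Gamma}}$, the latter by Lemma~\ref{lem:poincare_map_symplectic}) and therefore have determinant $1$. Evaluating the previous identity at $\lambda = 0$ then gives $\delta = 1$, completing the desired factorization and hence both directions of the equivalence. The only slightly delicate step is the passage from $X_r(2\pi)$ to $DP_r(u_r)$ on $T_{u_r}\Sigma_r$ encoded in the first displayed formula; the block triangularity of the resulting matrix is then immediate from $e_1$ being a Floquet eigenvector and from energy preservation along the flow.
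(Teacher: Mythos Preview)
Your argument is correct and is in fact the standard reduction; the paper does not give a proof at all but simply cites Lemma~8.5.6 in Meyer--Hall--Offin, so you are supplying what the paper outsources.

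A few minor comments. First, the phrase ``expanding along the first column and then the middle row'' is misleading for this block structure: the clean way is to expand along the last row (only the entry $\delta-\lambda$ is nonzero there, since you have verified that $X_r(2\pi)$ maps $T_{u_r}\Sigma_r\oplus\R e_1$ into itself), and then along the last column of the remaining $(2N-1)\times(2N-1)$ minor (only the entry $1-\lambda$ survives because the $e_1$-column of the top block is zero). The outcome $(\delta-\lambda)(1-\lambda)\det(DP_r(u_r)-\lambda\id)$ is as you state. Second, for $\delta=1$ you can avoid the determinant argument altogether: energy conservation $H_r\circ\phi_r(2\pi,\cdot)=H_r$ gives $\nabla H_r(u_r)^T X_r(2\pi)=\nabla H_r(u_r)^T$, and since $\nabla H_r(u_r)$ annihilates $T_{u_r}\Sigma_r$ and $e_1$ but not $f$, pairing with $f$ yields $\delta=1$ directly. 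Your symplectic-determinant route is also fine. Third, it is worth recording that $e_1=\dot u^{(r)}(0)\neq 0$ (clear for $r=0$ since $\dot Z(0)=-\nu J_N z_0\neq 0$, hence for small $r$ by continuity), so that $\R e_1$ really is one-dimensional and transversal to $\Sigma$.
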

For a proof we refer to \cite{meyer_introduction_2009}, Lemma 8.5.6.

\subsection{A common system of coordinates}
We want to apply the notion of approximately simple eigenvalues to the family $(DP_r(u_r))_{r\in[0,r_3)}$. Unfortunately these linear maps are not defined on a fixed linear space as it is required in Definition \ref{def:approx_simple_eigenvalue}. Moreover, for $r>0$ the space $D$, on which we know the expansion of $X_r(2\pi)$, is not contained in the domain $T_{u_r}\Sigma_r$ of $DP_r(u_r)$. In order to get around this we introduce a common system of coordinates using the tangent space
\[
T_{z_0}\Sigma_0=\set{u\in\R^{2N}:\omega_{\vec{\Gamma}}(z_0,u)=0,~\ska{\nabla H_0(z_0),u}=0}.
\]

\begin{lemma}\label{lem:existence_of_common_chart}
There exists $U\subset T_{z_0}\Sigma_0$ open, $0\in U$, $r_4\in(0,r_3)$ and a map $s\in\cC^m\big([0,r_4)\times U\big)$, $(r,u)\mapsto s(r,u)$ with $s(0,0)=1$, $D_us(0,0)v=0$ for every $v\in T_{z_0}\Sigma_0$ and such that for $r\in[0,r_4)$ the map
$\psi_r: U\rightarrow \Sigma_r$,
\[
\psi_r(u)=u+s(r,u)z_0
\]
is a symplectic $\cC^m$-diffeomorphism onto its image $\psi_r(U)\subset \Sigma_r$. The inverse transformation $\psi_r^{-1}:\psi_r(U)\rightarrow U$ is given by
\[
\psi_r^{-1}(w)=w+\frac{\pi}{L}\ska{\nabla H_0(z_0),w}z_0
\]
Moreover, $r_4>0$ can be choosen in a way such that $u_r\in\psi_r(U)$ for $r\in[0,r_4)$.
\end{lemma}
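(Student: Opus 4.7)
The plan is to determine $s(r,u)$ by solving the energy-level constraint defining $\Sigma_r$ via the implicit function theorem. First I would set
\[
G(r,u,s)=H_r(u+sz_0)-H_r(u_r)
\]
on a neighbourhood of $(0,0,1)$ in $[0,r_0)\times T_{z_0}\Sigma_0\times\R$; this is $\cC^m$ by the regularity of $(r,u)\mapsto H_r(u)$ established after Proposition~\ref{prop:existence_result_detailled}. At the base point we have $G(0,0,1)=H_0(z_0)-H_0(z_0)=0$ (since $u_0=z_0$) and
\[
\partial_sG(0,0,1)=\ska{\nabla H_0(z_0),z_0}=-\tfrac{L}{\pi}\neq 0
\]
by \eqref{eq:lem:properties_of_whole_plane:nabla_H_times_z} together with the algebraic nondegeneracy of $Z$ (Lemma~\ref{lem:algebraic_nondegenerate_implies_L_not_0}). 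The IFT then produces $r_4\in(0,r_3)$, an open neighbourhood $U\subset T_{z_0}\Sigma_0$ of $0$, and $s\in\cC^m([0,r_4)\times U)$ with $s(0,0)=1$. Differentiating $G(r,u,s(r,u))=0$ at $(0,0)$ in direction $v\in T_{z_0}\Sigma_0$ yields $\ska{\nabla H_0(z_0),v}-(L/\pi)D_us(0,0)v=0$, and the first term vanishes by definition of $T_{z_0}\Sigma_0$, giving $D_us(0,0)v=0$.

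Next I would check that $\psi_r(u)=u+s(r,u)z_0$ actually lands in $\Sigma_r$. The energy constraint is enforced by construction. For the linear constraint defining $\Sigma$ one computes
\[
\omega_{\vec{\Gamma}}(z_0,\psi_r(u))=\omega_{\vec{\Gamma}}(z_0,u)+s(r,u)\,\omega_{\vec{\Gamma}}(z_0,z_0),
\]
where the first term vanishes by $u\in T_{z_0}\Sigma_0\subset\Sigma$ and the second by the block structure of $J_N$ inside $\omega_{\vec{\Gamma}}$. For the inverse, the key observation is that the proposed formula $\psi_r^{-1}(w)=w+(\pi/L)\ska{\nabla H_0(z_0),w}z_0$ is independent of $r$: it is exactly the linear projection of $\Sigma$ onto $T_{z_0}\Sigma_0$ along the line $\R z_0$, whose well-definedness is precisely the transversality statement $\ska{\nabla H_0(z_0),z_0}=-L/\pi\neq 0$. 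A direct calculation verifies $\psi_r^{-1}\circ\psi_r=\id$ using $\ska{\nabla H_0(z_0),u}=0$, so together with the $\cC^m$-regularity of $s$ one obtains a $\cC^m$-diffeomorphism onto $\psi_r(U)$.

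The symplectic property reduces to a short bilinear computation: for $v_1,v_2\in T_{z_0}\Sigma_0$ the expansion of $\omega_{\vec{\Gamma}}(D\psi_r(u)v_1,D\psi_r(u)v_2)$ produces $\omega_{\vec{\Gamma}}(v_1,v_2)$ plus three cross terms, each of which vanishes because $\omega_{\vec{\Gamma}}(z_0,z_0)=0$ or $\omega_{\vec{\Gamma}}(z_0,v_i)=0$. Finally, $u_r\in\psi_r(U)$ for sufficiently small $r$ because $u_r\to z_0$ forces $\psi_r^{-1}(u_r)\to z_0+(\pi/L)(-L/\pi)z_0=0\in U$; shrinking $r_4$ if necessary yields the claim. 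I do not foresee a serious obstacle: the whole argument hinges on the single transversality relation $\ska{\nabla H_0(z_0),z_0}=-L/\pi\neq 0$, which simultaneously drives the IFT, defines the projection inverse, and splits $\Sigma$ cleanly into $T_{z_0}\Sigma_0\oplus\R z_0$.
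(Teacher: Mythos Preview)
Your proof is correct and follows essentially the same route as the paper: obtain $s$ from the implicit function theorem applied to $H_r(u+sz_0)=H_r(u_r)$ using $\ska{\nabla H_0(z_0),z_0}=-L/\pi\neq 0$, identify the inverse as the linear projection of $\Sigma$ onto $T_{z_0}\Sigma_0$ along $\R z_0$, and read off the symplectic property from $\omega_{\vec{\Gamma}}(z_0,\cdot)=0$ on $\Sigma$. The only cosmetic difference is that the paper also records the other composition $\psi_r\circ\psi_r^{-1}=\id$ explicitly, but as you note this follows automatically once $\psi_r^{-1}\circ\psi_r=\id_U$ is established.
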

\begin{proof}
Observe that if $u\in T_{z_0}\Sigma_0\subset \Sigma$, then $u+sz_0\in\Sigma$ for any $s\in\R$. It therefore remains to find a suitable choice of $s$, such that $u+s(r,u)z_0$ is also contained in the energy level belonging to $u_r$. In other words we need to solve $H_r(u+sz_0)=H_r(u_r)$ with respect to $s$. Since the equation obviously holds for $r=0$, $u=0$, $s=1$ and since $\ska{\nabla H_0(z_0),z_0}=-\frac{L}{\pi}\neq 0$, the implicit function theorem gives us numbers $r_4>0$, $\delta>0$, as well as a neighborhood $U\subset T_{z_0}\Sigma_0$ of $0$ and $s:[0,r_4)\times U\rightarrow (1-\delta,1+\delta)$, such that for $(r,u,s)\in[0,r_4)\times U\times(1-\delta,1+\delta)$ there holds $u+sz_0\in\Sigma_r$, if and only if $s=s(r,u)$. Clearly $s(0,0)=1$ and differentiation of $H_r(u+s(r,u)z_0)=H_r(u_r)$ shows
\[
D_us(0,0)v=\frac{\pi}{L}\ska{\nabla H_0(z_0),v}.
\]
Hence $D_us(0,0)v=0$ for $v\in T_{z_0}\Sigma_0\subset \nabla H_0(z_0)^\perp$.

Now for $r\in[0,r_4)$ define $\psi_r:U\rightarrow\Sigma_r$, $\psi_r(u)=u+s(r,u)z_0$. Via the splitting $\Sigma=T_{z_0}\Sigma_0\oplus\R z_0$, the set $O=\set{u+sz_0:u\in U,~s\in(1-\delta,1+\delta)}$ is open in $\Sigma$ and therefore $O\cap\Sigma_r=\psi_r(U)$ is open in $\Sigma_r$. 

Next one easily checks by Lemma \ref{lem:properties_of_whole_plane_hamiltonian} that $\tilde{\psi}_r:\Sigma\rightarrow T_{z_0}\Sigma_0$, 
\[
\tilde{\psi}_r(w)=w+\frac{\pi}{L}\ska{\nabla H_0(z_0),w}z_0
\] is well-defined as a mapping into $T_{z_0}\Sigma_0$ and that $\tilde{\psi}_r(\psi_r(u))=u$ for $u\in U$. It follows $\tilde{\psi}_r(\psi_r(U))=U$ and also $\psi_r(\tilde{\psi}_r(w))=w$ for $w\in\psi_r(U)$. Hence $\psi_r:U\rightarrow\psi_r(U)$ is a diffeomorphism with $\psi_r^{-1}=\tilde{\psi}_{r|\psi_r(U)}$.
That $\psi_r$ is even a symplectic diffeomorphism follows directly from the definition of $\Sigma$ and $\psi_r$.

Finally, $\tilde{\psi_r}(u_r)\rightarrow 0\in U$ as $r\rightarrow 0$ implies $u_r\in \psi_r(U)$ for $r$ sufficiently small.
\end{proof}

\subsection{\texorpdfstring{The bifurcation of Floquet multiplier $1$}{The bifurcation of Floquet multiplier 1}}
Instead of $DP_r(u_r):T_{u_r}\Sigma_r\rightarrow T_{u_r}\Sigma_r$ we will now study the family
\[
M_r:=D\psi_r^{-1}(u_r)\circ DP_r(u_r)\circ D\psi_r(\psi_r^{-1}(u_r)):T_{z_0}\Sigma_0\rightarrow T_{z_0}\Sigma_0.
\]
In particular we will show that $1\in\sigma(M_0)$ is an approximately simple eigenvalue with respect to the $\cC^{m-2}$-family $(M_r)_{r\in[0,r_4)}$ of symplectic maps. By the definition of $M_r$ and by Lemma \ref{lem:floquet_multipliers_relation_to_monodromy_matrix} the spectrum of $M_0$ is given by $1,1,\lambda_3,\lambda_3^{-1},\ldots,\lambda_N,\lambda_N^{-1}$, where $\lambda_j,\lambda_j^{-1}\neq 1$, $j=3,\ldots,N$ are the nontrivial Floquet multipliers of the algebraic nondegenerate relative equilibrium $Z(t)$. Due to the symplectic nature of $M_r$, the pair $1,1$ can only bifurcate into a pair $\lambda_2(r),\lambda_2(r)^{-1}\in \R\cup S^1$. For a bifurcation into $\C\setminus(\R\cup S^1)$ the mulitiplicity of the eigenvalue $1$ of $M_0$ has to be at least four, since $\lambda_2(r),\lambda_2(r)^{-1},\bar{\lambda}_2(r),\bar{\lambda}_2(r)^{-1}$ would be four different eigenvalues of $M_r$, $r>0$.
\begin{lemma}\label{lem:1_is_approximately_simple}
The eigenspace $E_1(M_0)$ is given by $E_1(M_0)=D\subset T_{z_0}\Sigma_0$. Furthermore, as $r\rightarrow 0$ we have
\[
M_{r|D}=\id_D-2\pi\Gamma r^2 B_0+o(r^2),
\]
where $B_0:D\rightarrow D$, $B_0\hat{a}=\big(J\nabla^2h(0)a\big)\widehat{\phantom{|}}$ and $\sigma(B_0)=\set{\pm\sqrt{-\det\nabla^2h(0)}}$.
In other words the eigenvalue $1\in\sigma(M_0)$ is approximately simple with respect to the family $(M_r)_{r\in[0,r_4)}$.
\end{lemma}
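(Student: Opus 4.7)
The plan is to track an arbitrary vector $\hat{a}\in D$ through the three factors $D\psi_r^{-1}(u_r)\circ DP_r(u_r)\circ D\psi_r(\psi_r^{-1}(u_r))$ in the definition of $M_r$, keeping errors of size $o(r^2)$ throughout, and using the structural identities collected in Lemma \ref{lem:properties_of_whole_plane_hamiltonian} to kill every contribution that would lie outside $D$.

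First I will identify $E_1(M_0)$. The inclusion $D\subset T_{z_0}\Sigma_0$ is immediate from $\nabla H_0(z_0)\in D^\perp$, $M_{\vec{\Gamma}}z_0\in D^\perp$ and the $J_N$-invariance of $D$. At $r=0$ one has $\psi_0^{-1}(z_0)=0$, and because $D_us(0,0)$ vanishes on $T_{z_0}\Sigma_0$ by Lemma \ref{lem:existence_of_common_chart}, both $D\psi_0(0)$ and $D\psi_0^{-1}(z_0)$ restrict to the identity on $D$. Writing $DP_0(z_0)\hat{a}=X_0(2\pi)\hat{a}+\dot Z(0)\,D_u\tau(0,z_0)\hat{a}$ and invoking Lemma \ref{lem:floquet_solutions_of_whole_plane_case} together with Lemma \ref{lem:properties_of_tau} yields $DP_0(z_0)\hat{a}=\hat{a}$, hence $D\subset E_1(M_0)$. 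Since $Z$ is algebraic nondegenerate, Lemma \ref{lem:floquet_multipliers_relation_to_monodromy_matrix} forces $\nu_1(M_0)=2=\dim D$, and therefore $E_1(M_0)=D$.

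For the expansion I will compute $M_r\hat{a}$ step by step. Differentiating the defining equation $H_r(u+s(r,u)z_0)=H_r(u_r)$ at $u=\psi_r^{-1}(u_r)$ gives
\[
D\psi_r(\psi_r^{-1}(u_r))\hat{a}=\hat{a}+s_rz_0,\qquad s_r=-\frac{\ska{\nabla H_r(u_r),\hat{a}}}{\ska{\nabla H_r(u_r),z_0}}.
\]
The denominator tends to $-L/\pi\neq 0$. The numerator is the crux: the $H_0$-part vanishes by \eqref{eq:lem:properties_of_whole_plane:orthogonality_of_H_0}, while the Taylor expansion $-r\nabla F(ru_r)=-r^2\nabla^2F(0)z_0+O(r^3)$ combined with $\nabla^2F(0)\hat{a}=\Gamma M_{\vec{\Gamma}}(\nabla^2h(0)a)\widehat{\phantom{|}}$ (from \eqref{eq:lem:basic_properties_nabla_F}) and $M_{\vec{\Gamma}}z_0\perp D$ gives $\ska{\nabla H_r(u_r),\hat{a}}=O(r^3)$, so $s_r=o(r^2)$. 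Next, $DP_r(u_r)v=X_r(2\pi)v+\dot u^{(r)}(0)\,D_u\tau(r,u_r)v$; applied to $\hat{a}$, Lemma \ref{lem:expansion_of_monodromy_operator_on_D} produces the main term $\hat{a}-2\pi\Gamma r^2B_0\hat{a}$, Lemma \ref{lem:properties_of_tau} makes the time-shift correction $o(r^2)$, and the bounded image $DP_r(u_r)z_0$ turns $s_rz_0$ into $o(r^2)$. Finally $D\psi_r^{-1}(u_r)$ only adds a multiple of $z_0$ proportional to $\ska{\nabla H_0(z_0),\cdot\,}$; on both $\hat{a}$ and $B_0\hat{a}$ this pairing vanishes because they belong to $D$, leaving $M_r\hat{a}=\hat{a}-2\pi\Gamma r^2B_0\hat{a}+o(r^2)$.

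The spectrum of $B_0$ is the spectrum of $J\nabla^2h(0)$ on $\R^2$, and the characteristic polynomial computation already carried out in Corollary \ref{cor:almost_eigenvalues_for_X_r} gives $\sigma(B_0)=\set{\pm\sqrt{-\det\nabla^2h(0)}}$; nondegeneracy of $a_0=0$ ensures these two eigenvalues are distinct. Taking $f(r)=-2\pi\Gamma r^2$, which is continuous with $f(0)=0$ and $f(r)\neq 0$ for $r>0$ thanks to $\Gamma\neq 0$, then verifies Definition \ref{def:approx_simple_eigenvalue} with $V_0=D$. The main technical hurdle in the whole argument is the $o(r^2)$ estimate for $s_r$: it rests on two independent cancellations, namely the translation invariance of $H_0$ (killing the leading order) and the identity $M_{\vec{\Gamma}}z_0\in D^\perp$ (killing the $r^2$ coefficient arising from $\nabla F$); without either of them a nonzero $z_0$-component would survive in $D\psi_r(\psi_r^{-1}(u_r))\hat{a}$ and contaminate the clean $B_0$-dominated expansion.
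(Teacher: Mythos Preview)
Your proof is correct and, in one key respect, cleaner than the paper's. Both arguments coincide on the identification $E_1(M_0)=D$ and on the final use of Corollary~\ref{cor:almost_eigenvalues_for_X_r} for $\sigma(B_0)$, but they diverge in how the coefficient $s_r=D_us(r,x_r)\hat{a}$ is handled.

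The paper never estimates $s_r$ beyond $s_r=o(1)$ (from $D_us(0,0)_{|T_{z_0}\Sigma_0}=0$). Instead it carries $s_r$ through the computation of $D\psi_r(x_r)M_r\hat{a}$, obtaining a $z_0$-contribution $z_0\,s_r$ and a $J_Nz_0$-contribution $\nu J_Nz_0\bigl(4\pi s_r-\alpha(r,\hat{a})\bigr)$; the latter is $o(r^2)$ by the expansion $\alpha(r,\hat{a})=4\pi s_r+o(r^2)$ coming from Lemma~\ref{lem:properties_of_tau}, while the former is reabsorbed as $D\psi_r(x_r)\hat{b}_r-\hat{b}_r$ (up to $o(r^2)$), yielding $D\psi_r(x_r)M_r\hat{a}=D\psi_r(x_r)\hat{b}_r+o(r^2)$ and hence the result after inverting $D\psi_r(x_r)$.

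You instead compute $s_r$ directly from the implicit relation $H_r(u+s(r,u)z_0)=H_r(u_r)$ and show $s_r=O(r^3)$ via two independent cancellations: $\ska{\nabla H_0(u_r),\hat{a}}=0$ from translation invariance, and $\ska{\nabla^2F(0)z_0,\hat{a}}=\Gamma\ska{M_{\vec{\Gamma}}z_0,(\nabla^2h(0)a)\widehat{\phantom{|}}}=0$ from \eqref{eq:lem:properties_of_whole_plane:center_of_vorticity_0}. With $s_r=o(r^2)$ in hand, the $z_0$-component is negligible from the outset and the remaining steps become a straightforward application of Lemma~\ref{lem:expansion_of_monodromy_operator_on_D}, Lemma~\ref{lem:properties_of_tau}, and the explicit formula for $D\psi_r^{-1}$. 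Your route trades the paper's cancellation bookkeeping for an upfront estimate; it is shorter, but it requires spotting the second cancellation, which the paper's approach does not need.
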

\begin{proof}
Since $D\perp \nabla H_0(z_0)$ and $M_{\vec{\Gamma}} z_0\perp D$ by Lemma \ref{lem:properties_of_whole_plane_hamiltonian}, $D$ is contained in the tangent space $T_{z_0}\Sigma_0$. 

By Lemma \ref{lem:existence_of_common_chart} $D\psi_0(0)=D\psi_0(\psi_0^{-1}(z_0)):T_{z_0}\Sigma_0\rightarrow T_{z_0}\Sigma_0$ is the identity and thus $M_0=DP_0(z_0)$. Next Lemma \ref{lem:properties_of_tau} shows that $M_0=DP_0(z_0)=X_0(2\pi)_{|T_{z_0}\Sigma_0}$ and we conclude $E_1(M_0)=D$.

Now we begin with the expansion of $M_r$ on $D$. Let $x_r:=\psi_r^{-1}(u_r)$, $\hat{a}\in D$, $\hat{b}_r:=\hat{a}-2\pi\Gamma r^2 B_0\hat{a}\in D$ and consider 
\begin{align*}
D\psi_r(x_r)M_r\hat{a}&=DP_r(u_r)D\psi_r(x_r)\hat{a}\\
&=\big(D_u\phi_r(2\pi,u_r)+\partial_t\phi_r(2\pi,u_r)D_u\tau(r,u_r)\big)\big[\hat{a}+z_0 D_u s(r,x_r)\hat{a}\big]\\
&=X_r(2\pi)\hat{a}+D_us(r,x_r)[\hat{a}]X_r(2\pi)z_0+\alpha(r,\hat{a})M_{\vec{\Gamma}}^{-1}J_N\nabla H_r(u_r),
\end{align*}
where
\begin{align*}
\alpha(r,\hat{a})&=D_u\tau(r,u_r)\hat{a}+D_u\tau(r,u_r)[z_0]D_us(r,x_r)[\hat{a}]\\
&=o(r^2)+(4\pi+O(r^2))D_us(r,x_r)\hat{a}=4\pi D_us(r,x_r)\hat{a}+o(r^2)
\end{align*}
as $r\rightarrow 0$ by Lemma \ref{lem:properties_of_tau}. In particular by Lemma \ref{lem:existence_of_common_chart}, $\alpha(r,\hat{a})=o(1)$. Using now this expansion for $\alpha(r,\hat{a})$, the expansion of the monodromy operator in Lemma \ref{lem:expansion_of_monodromy_operator_on_D}, as well as Lemma \ref{lem:floquet_solutions_of_whole_plane_case} and \eqref{eq:lem:properties_of_whole_plane:orthogonality_of_H_0},\eqref{eq:lem:basic_properties_derivatives_of_H_r_0} it follows
\begin{align*}
D\psi_r(x_r)M_r\hat{a}&=\hat{b}_r+o(r^2)+\big(z_0+4\pi\nu J_N z_0+O(r^2)\big)D_us(r,x_r)[\hat{a}]\\
&\hspace{33pt}+\alpha(r,\hat{a})\big(-\nu J_Nz_0+O(r^2)\big)\\
&=\hat{b}_r+z_0D_us(r,x_r)[\hat{b}_r]+z_0D_us(r,x_r)[\hat{a}-\hat{b}_r]\\
&\hspace{33pt}+\nu J_N z_0\big(4\pi D_us(r,x_r)\hat{a}-\alpha(r,\hat{a})\big)+o(r^2)\\
&=D\psi_r(x_r)\hat{b}_r+\nu J_N z_0\big(4\pi D_us(r,x_r)\hat{a}-\alpha(r,\hat{a})\big)+o(r^2)\\
&=D\psi_r(x_r)\hat{b}_r+o(r^2).
\end{align*}
Therefore we conclude
\[
M_r\hat{a}=\hat{a}-2\pi\Gamma r^2 B_0\hat{a}+o(r^2).
\]
\end{proof}
\begin{proof}[Proof of Theorem \ref{thm:linear_stability}]
Let $\Gamma$, $g$, $a_0$, $Z(t)$ be as stated in the Theorem, without restriction $a_0=0$. 
Combining Lemma \ref{lem:actual_eigenvalues} and Lemma \ref{lem:1_is_approximately_simple} we see that $u^{(r)}$ for $r>0$ small has a pair of Floquet multipliers
\[
\lambda_{\pm}(r)=1\pm 2\pi\Gamma r^2\sqrt{-\det \nabla^2 h(0)}+o(r^2).
\]
Therefore if $a_0=0$ is a saddle point of $h$, then $\lambda_{\pm}(r)\in\R\setminus\{1\}$ and thus $u^{(r)}$ is spectrally unstable. This 
proves part a). 

If $a_0=0$ is a minimum or maximum of $h$, then we see that the double eigenvalue $1,1$ of $M_0$ bifurcates into a pair $\lambda_{\pm}(r)$ of simple eigenvalues lying in $S^1$. If additionally the remaining nontrivial multipliers $\lambda_3,\lambda_3^{-1},\ldots,\lambda_N,\lambda_N^{-1}$ of $Z(t)$ are all simple and contained in $S^1$, i.e., if $Z(t)$ is LRE-stable, then these multipliers can not leave $S^1$ unless they collide. Therefore for all $r>0$ small enough the nontrivial multipliers of $u^{(r)}$ are all contained in $S^1$ and simple. Thus $u^{(r)}$ is L-stable.
\end{proof}
\begin{remark}
The proof of Theorem \ref{thm:linear_stability} shows that the requirement of $Z(t)$ being LRE-stable is not necessary for the instability result in part a). The algebraic nondegenerateness of $Z(t)$ is enough to conclude that the induced solutions are spectrally unstable.
\end{remark}

\section{Nonlinear stability for two vortices}\label{sec:nonlinear_stability}
In order to obtain a nonlinear stability result for $N=2$ vortices we follow ideas of the course \cite{ortega_stability_2017}. With this approach the computation of the twist coefficient, for example with the formulas given in \cite{ortega_stability_2017}, is not necessary. The price to pay is the loss of a set of parameter values having measure $0$.
\begin{proof}[Proof of Theorem \ref{thm:nonlinear_stability_2_vortices}]
Let $g\in\cC^\infty(\Omega\times\Omega)$, $N=2$ and $\Gamma_1+\Gamma_2\neq 0$. Assume that $a_0=0\in\Omega$ is a nondegenerate local minimum or maximum of $h$ and let $\big(u^{(r)}\big)_{r\in[0,r_0)}$ be the solutions of Proposition \ref{prop:existence_result_detailled} with $u^{(0)}$ being the $2\pi$-periodic vortex pair of the whole-plane system \eqref{eq:n_vortex_whole_plane}. Note that now $[0,r_0)\times\R\ni(r,t)\mapsto u^{(r)}(t)\in\R^{4}$ is of class $\cC^\infty$ and that any Poincar\'e map associated to $u^{(r)}$ is defined on a $2$-dimensional symplectic submanifold of $\R^4$, which without restriction we can assume to be smooth.

By our linear analysis in Section \ref{sec:application_to_a_poincare_section} we know that any such Poincar\'e map can be conjugated in first order to a rotation by the angle $\varepsilon(r)$, where
\begin{equation}\label{eq:nontrivial_floquet_multipliers_of_2_vortices}
e^{\pm i\varepsilon(r)}=\lambda_{\pm}(r)=1\pm 2\pi\Gamma r^2\sqrt{-\det \nabla^2 h(0)}+o(r^2)
\end{equation}
is the pair of nontrivial Floquet multipliers of $u^{(r)}$. As a consequence of Herman's last theorem, see  Thm. 3 and Section 1.3 in \cite{fayad_herman_2009}, we see that $u^{(r)}$ is isoenergetically orbitally stable provided the rotation number $\frac{\varepsilon(r)}{2\pi}$ satisfies a Diophantine condition. Note that the Poincar\'e map $P_r$ has the intersection property around the fixed point $u_r(0)$ because it is canonical. The Diophantine condition is satisfied, if there exists numbers $\gamma>0$, $\sigma\geq 2$, such that
$\varepsilon(r)\in 2\pi \DC(\gamma,\sigma)$, where
\[
\DC(\gamma,\sigma)=\set{x\in\R: \abs{x-\frac{p}{q}}\geq\frac{\gamma}{q^\sigma} \text{ for all } \frac{p}{q}\in\Q,~q\geq 1}.
\]
The set of all numbers satisfying this condition,
\[
\DC=\bigcup_{\sigma\geq 2}\bigcup_{\gamma>0}\DC(\gamma,\sigma)
\]
has full measure.

Assume for instance that $\varepsilon(r)>0$ for all $r\in(0,r_0)$ and define the diffeomorphism $f:\big(0,\frac{1}{2}\big)\rightarrow (0,4)$, $f(x)=2+2\cos(2\pi x)$. Then $\varepsilon(r)\in 2\pi \DC\cap (0,\pi)$, if and only if $f\big(\frac{\varepsilon(r)}{2\pi}\big)\in f\big(\DC\cap\big(0,\frac{1}{2}\big)\big)=:\DC^\prime$. Now observe that the set $\DC^\prime\subset (0,4)$ has full measure as well and 
\[
f\left(\frac{\varepsilon(r)}{2\pi}\right)=2+2\cos(\varepsilon(r))=2+\lambda_+(r)+\lambda_-(r)=\tr X_r(2\pi).
\]
Thus it remains to show that there exists $r_1\in(0,r_0)$, such that the trace of the fundamental solution $\tr X_r(2\pi)$ lies in $\DC^\prime$ for almost every $r\in(0,r_1)$. 

We abbreviate $c:= 2\pi\Gamma\sqrt{\det\nabla^2h(0)}\in\R\setminus\{0\}$ and write $\lambda_+(r)=\xi(r)+\eta(r)i$ with real valued functions $\xi$ and $\eta$. By equation \eqref{eq:nontrivial_floquet_multipliers_of_2_vortices}, we have $\eta(r)=cr^2+o(r^2)$ and thus 
\[
\xi(r)=\sqrt{1-\eta(r)^2}=1-\frac{1}{2}\eta(r)^2+o(\eta(r)^2)=1-\frac{1}{2}c^2r^4+o(r^4).
\] It follows
\[
\tr X_r(2\pi)=2+2\xi(r)=4-c^2r^4+o(r^4)
\]
Since $T:[0,r_0)\rightarrow (0,4)$, $r\mapsto \tr X_r(2\pi)$ is a smooth map, we conclude that $T$ restricted to some small interval $(0,r_1)$ is a diffeomorphism onto the corresponding image. Therefore $\set{r\in(0,r_1):T(r)\in \DC^\prime}$ has full measure and the proof of Theorem \ref{thm:nonlinear_stability_2_vortices} is finished.
\end{proof}

\vspace{5pt}
\noindent\textbf{Acknowledgements.} B.G. is very grateful for the hospitality of R.O. and the group of Differential Equations during his research stay in Granada.

\vspace{10pt}
\noindent Bj\"orn Gebhard\\
 Mathematisches Institut,  Universit\"at Gie\ss en,  Arndtstr.\ 2,  35392 Gie\ss en,  Germany\\
 bjoern.gebhard@math.uni-giessen.de

\vspace{10pt}
\noindent Rafael Ortega\\ 
 Departamento de Matem\'atica Aplicada, Universidad de Granada, 18071 Granada, Spain\\
 rortega@ugr.es
\end{document}